\definecolor{acc-green}{RGB}{0,128,0}
\definecolor{acc-red}{RGB}{219,10,91}
\definecolor{acc-blue}{RGB}{0,0,224}
\newcommand{\DeclareBinOperator}[2]{\newcommand{#1}{\mathbin{#2}}}
\newcommand{\mlink}[2]{\hyperref[#1]{#2}}
\newcommand{\mlabel}[1]{\phantomsection{}\label{#1}}
\newcommand{\blank}{{-}}
\newcommand{\judeq}{\mathrel{\mlink{judeq}{\doteq}}}
\newcommand{\defeq}{\mathrel{:=}}
\DeclareMathOperator{\pr}{pr}
\newcommand{\ie}{i.e.~}
\newcommand{\eg}{e.g.~}
\DeclareMathOperator{\id}{id}
\DeclareBinOperator{\comp}{\circ}
\DeclareBinOperator{\compr}{\mlink{compr}{\smash{\raisebox{-0.5pt}{$\mathrlap{\hspace{1.2pt}\lrcorner}$}}\circ}}
\DeclareBinOperator{\compl}{\mlink{compl}{\smash{\raisebox{-1.5pt}{$\mathrlap{\hspace{-1pt}\ulcorner}$}}\circ}}
\newcommand{\tr}{\mlink{tr}{{\scriptstyle\#}}}
\newcommand{\ap}{\mlink{ap}{\operatorname{ap}}}
\newcommand{\apd}{\mlink{apd}{\operatorname{apd}}}
\DeclareMathOperator{\refl}{refl}
\DeclareBinOperator{\concat}{\mlink{concat}{\operatorname{\bullet}}}
\newcommand{\htpy}{\mathrel{\mlink{htpy}{\sim}}}
\DeclareMathOperator{\reflhtpy}{refl-htpy}
\DeclareBinOperator{\hconcat}{\mlink{hconcat}{\bullet_h}}
\DeclareBinOperator{\lwhisk}{\mlink{lwhisk}{\cdot_{l}}}
\DeclareBinOperator{\rwhisk}{\mlink{rwhisk}{\cdot_{r}}}
\newcommand{\0}{\mathbf{0}}
\newcommand{\UU}{\mlink{UU}{\mathcal{U}}}
\newcommand{\T}{\mathcal{T}}
\newcommand{\N}{\mathbb{N}}
\DeclareMathOperator{\exf}{ex-falso}
\newcommand{\PO}[3]{\mlink{PO}{#2 \sqcup_{#1} #3}}
\newcommand{\inl}{\mlink{inl}{\operatorname{inl}}}
\newcommand{\inr}{\mlink{inr}{\operatorname{inr}}}
\newcommand{\glue}{\mlink{glue}{\operatorname{glue}}}
\DeclareMathOperator{\incl}{incl}
\newcommand{\depCoconePO}{\mlink{dep-cocone-po}{\operatorname{dep-cocone}}}
\newcommand{\depCoconeMapPO}{\mlink{dep-cocone-map-po}{\operatorname{dep-cocone-map}}}
\newcommand{\depCogapPO}{\mlink{dep-cogap-po}{\operatorname{dep-cogap}}}
\newcommand{\equivDD}{\mathrel{\mlink{equiv-dd}{\simeq}}}
\newcommand{\sectPO}{\mlink{sect-DD}{\operatorname{sect}}}
\newcommand{\evReflIdSystemTy}{\mlink{ev-refl-id-system}{\operatorname{ev-refl}}}
\newcommand{\evReflIdSystemDD}{\mlink{ev-refl-id-system}{\operatorname{ev-refl}}}
\DeclareMathOperator{\indidsystemDD}{ind-Q}
\newcommand{\depCoconeMapSC}{\mlink{dep-cocone-map-sc}{\operatorname{dep-cocone-map}}}
\newcommand{\inn}{\mlink{inn}{\iota}}
\newcommand{\kapn}{\mlink{kapn}{\kappa}}
\newcommand{\PA}[1]{\mlink{PAn}{P_A^{#1}}}
\newcommand{\inclA}[1]{\mlink{inclA}{\operatorname{incl}_A^{#1}}}
\newcommand{\glueA}[1]{\mlink{glueA}{\operatorname{glue}_A^{#1}}}
\newcommand{\PAdiag}[1]{\mlink{PAdiag}{P_A^{#1}}}
\newcommand{\PAinf}{\mlink{PAinf}{P_A^{\infty}}}
\newcommand{\PB}[1]{\mlink{PBn}{P_B^{#1}}}
\newcommand{\inclB}[1]{\mlink{inclB}{\operatorname{incl}_B^{#1}}}
\newcommand{\glueB}[1]{\mlink{glueB}{\operatorname{glue}_B^{#1}}}
\newcommand{\PBdiag}[1]{\mlink{PBdiag}{P_B^{#1}}}
\newcommand{\PBinf}{\mlink{PBinf}{P_B^{\infty}}}
\newcommand{\inA}[1]{\mlink{inA}{\iota_A^{#1}}}
\newcommand{\inB}[1]{\mlink{inB}{\iota_B^{#1}}}
\newcommand{\kapA}[1]{\mlink{kapA}{\kappa_A^{#1}}}
\newcommand{\kapB}[1]{\mlink{kapB}{\kappa_B^{#1}}}
\newcommand{\concatpre}{\mathbin{\bullet}}
\newcommand{\concats}[1]{\mathbin{\mlink{concats}{\bullet_{#1}}}}
\newcommand{\concatinvs}[1]{\mathbin{\mlink{concatinvs}{\bullet_{#1}}}}
\DeclareBinOperator{\concatinf}{\mlink{concatinf}{\bullet_{\infty}}}
\newcommand{\PhiDiag}[1]{\mlink{PhiDiag}{\Phi_{#1}}}
\newcommand{\dA}[1]{\mlink{dA}{d_A^{#1}}}
\newcommand{\dB}[1]{\mlink{dB}{d_B^{#1}}}
\newcommand{\tAn}[1]{\mlink{tAn}{t_A^{#1}}}
\newcommand{\tBn}[1]{\mlink{tBn}{t_B^{#1}}}
\newcommand{\tAinf}{\mlink{tAinf}{t_A^{\infty}}}
\newcommand{\tBinf}{\mlink{tBinf}{t_B^{\infty}}}
\newcommand{\tSinf}{\mlink{tSinf}{t_S^{\infty}}}
\newcommand{\reflinf}{\mlink{refl-infty}{\operatorname{refl}_{\infty}}}
\date{}
\title{}
\begin{document}

\title[Formalization of the zigzag construction of path spaces of pushouts]%
{Formalization of the zigzag construction of path spaces of pushouts in homotopy type theory}

\renewcommand{\sectionautorefname}{Section}

\newcommand{\ExternalLink}{%
\tikz[x=1.2ex, y=1.2ex, baseline=-0.05ex]{%
    \begin{scope}[x=1ex, y=1ex]
        \clip (-0.1,-0.1)
            --++ (-0, 1.2)
            --++ (0.6, 0)
            --++ (0, -0.6)
            --++ (0.6, 0)
            --++ (0, -1);
        \path[draw,
            line width = 0.5,
            rounded corners=0.5]
            (0,0) rectangle (1,1);
    \end{scope}
    \path[draw, line width = 0.5] (0.5, 0.5)
        -- (1, 1);
    \path[draw, line width = 0.5] (0.6, 1)
        -- (1, 1) -- (1, 0.6);
    }%
}

\captionsetup{subrefformat=parens}
\subcaptionsetup[figure]{labelfont=rm}

\DeclareRobustCommand{\constrEnd}{%
  \leavevmode\unskip\penalty9999 \hbox{}\nobreak\hfill
  \quad\hbox{$\lrcorner$}%
}
\makeatletter
\def\@endtheorem{\constrEnd\endtrivlist\@endpefalse}
\makeatother

\theoremstyle{definition}\newtheorem{constr}[thm]{Construction}

\keywords{Homotopy type theory, Univalent foundations, Agda, Synthetic homotopy theory, Formalized mathematics}

\author[V.~Štěpančík]{Vojtěch Štěpančík\lmcsorcid{0009-0008-6749-5665}}
\address{Nantes Université, École Centrale Nantes, CNRS, INRIA, LS2N, UMR 6004, France}
\email{vojtech.stepancik@inria.fr}

\begin{abstract}
A pre-print of Wärn gives a pen-and-paper construction of a type family characterizing the path spaces of an arbitrary pushout, and a natural language argument for its correctness. This paper presents the first formalization of the construction and a proof that it is fiberwise equivalent to the path spaces. The formalization is carried out in axiomatic homotopy type theory, using the Agda proof assistant and the agda-unimath library.
\end{abstract}

\maketitle
\section{Introduction}
\label{sec:org54d9ae1}

Synthetic homotopy theory is a branch of type theory which treats types as homotopy spaces, with elements representing points, and identity types representing path spaces \cite{AW09}. An important discipline of synthetic homotopy theory is the study of path spaces, and in order to say anything meaningful about the path spaces of a type \(A\), the first step is often to construct a type family \(P\) that is convenient to work with, and show that there is an equivalence \((a_0 = a) \simeq P(a)\) for all \(a : A\). A good choice of \(P\) can help proving results about truncatedness and connectivity of \(A\), \eg putting bounds on its non-trivial fundamental groups.

A pre-print of Wärn \cite{War23} gives a pen-and-paper construction of such a type family for pushouts, including a proof that it is equivalent to the path spaces. It may be seen as a generalization of the James construction as defined in homotopy type theory \cite{Bru19}, which describes loop spaces of pointed connected types.

This paper presents the first formalization of Wärn's construction and its correctness. It is carried out in axiomatic homotopy type theory \cite{UF13,Rij22}, using the Agda proof assistant \cite{Agda} and the agda-unimath library \cite{AU25}. In the process, the author discovered that the original concise two-line proof relies implicitly on functoriality of sequential colimits with respect to dependent and fiberwise maps. We develop those, noting that their construction involves technical path algebra, and fill in the details of the coherences necessary to finish the proof of correctness.
\subsection*{Overview}
\label{sec:orgf11ddfa}
After \autoref{sec:hott} fixes the foundations of the formalization and notations of the paper, we dedicate \autoref{sec:diagrams} to explaining the diagrammatic language used in the paper. We then introduce pushouts in \autoref{sec:pushouts}, which contains original work on identity systems of descent data. In \autoref{sec:seqcol} we recall sequential colimits and develop new dependent functoriality principles. Finally in \autoref{sec:idpo} we present an encoding of the zigzag construction accepted by Agda's termination checker, and show that it forms an identity system. All definitions and proofs come with a link to a fixed online version of their formalization, indicated by the ``\ExternalLink{}'' icon. Names of defined constants in this document also serve as internal links to their respective definitions.
\subsection*{Contributions}
\label{sec:orge86a1f0}
The main theoretical contribution is the problem decomposition of the ``invisible mathematics''\footnote{A term coined, to the best of my knowledge, by Andrej Bauer in his talk ``Formalizing invisible mathematics.'' \url{https://math.andrej.com/2023/02/13/formalizing-invisible-mathematics/}} in the proof of correctness of the zigzag construction: introducing identity systems of descent data to characterize path spaces of pushouts; defining dependent and fiberwise morphisms of sequential diagrams, their compositions and preservation of these compositions when taking the colimit; leveraging these functoriality results to construct a necessary coherence datum; and providing diagrammatic explanations of the coherence data involved. In order to accurately represent diagrammatic reasoning, an extension of a standard visual language of commuting diagrams was developed.

On the formalization side, all code starting with \autoref{lem:sect-sect-descent-data} was contributed to the library by the author, except for \autoref{defn:identity-system}. Sequential colimits had been formalized in Lean 3 \cite{SvDR20}, but with a different objective, so the overlap with the present work consists of definitions. No infrastructure for sequential colimits had been available in the agda-unimath library. The formalization also extensively leverages descent data as a code organization tool, an approach illustrated for the circle by Rijke \cite[Section 22.2]{Rij22}, but not found in other formalized libraries.
\section{Homotopy Type Theory}
\label{sec:hott}
We assume some familiarity with homotopy type theory (HoTT) and common vocabulary, such as type families, identity types, equivalences, homotopies, fiberwise maps, commuting diagrams and univalence. The foundational framework of this paper is axiomatic homotopy type theory as described by Rijke in \cite{Rij22} and implemented in the agda-unimath library \cite{AU25}. The main difference from ``Book HoTT'' \cite{UF13} is that the formalization does not have first-class higher inductive types (HITs) --- in keeping with the library style, those are either postulated without rewrite rules, or a universal cocone is accepted as a function argument. As a result, functions defined by the universal property only come with propositional computation rules, while some of them would be judgmental in Book HoTT.

We use the symbol ``\(\judeq\)''\mlabel{judeq} to denote metatheoretical judgmental equality, ``\(\defeq\)'' for definitions, and ``\(=\)'' for the identity type. Elements of identity types are called ``paths'' or ``identifications''. We use the symbol ``\(\UU\)''\mlabel{UU} for univalent universes. Universe levels are not specified in the text, but the formalization is appropriately universe polymorphic. We adopt Agda's notation for dependent function types and implicit arguments: the type of dependent functions from a type \(A\) to a type family \(P\) over \(A\) is denoted \((a : A) \to P(a)\). Both juxtaposition, \eg \(f x\), and explicit parentheses, \eg \(f(x)\), are used for function application. Implicit arguments are put in curly braces, as \(\{a : A\} \to P(a)\), and are omitted when applying the function, inferring the appropriate value from the surrounding context. When declaring types of function symbols, we also use the shorter notation \(e(a : A): P(a)\) instead of \(e : (a : A) \to P(a)\). Dependent pair types are denoted \(\Sigma (a : A).\, P(a)\) with projections \(\pr_i\) in diagrams, but in writing we prefer to write them out as ``the type of pairs \((a, p)\) with \(a : A\) and \(p : P(a)\)''. We implicitly use the structure identity principle \cite[Section 11.6]{Rij22} to characterize path spaces of iterated sigma types component-wise.

Paths are concatenated in diagrammatic order with the left associative ``\(\concat\)''\mlabel{concat} operation, and inverted with the \((\blank)^{-1}\) operation. A path \(p : x = y\) in a type \(A\) induces the transport function \(p \tr : P(x) \to P(y)\)\mlabel{tr} by sending \(\refl\) to the identity. A function \(f : A \to B\) acts on paths in \(A\) by the operation \(\ap_f : x = y \to f(x) = f(y)\)\mlabel{ap}, and a dependent function \(s : (a : A) \to P(a)\) acts on paths in \(A\) by the operation \(\apd_s(p : x = y) : p\tr s(x) = s(y)\)\mlabel{apd}. We call paths of the form \(p \tr u = v\) ``dependent paths from \(u\) to \(v\) over \(p\)''. A homotopy \(H : f \htpy g\)\mlabel{htpy} between functions \(f, g : (a : A) \to P(a)\) is a family of paths \(H(a) : f(a) = g(a)\). Homotopies compose vertically with the ``\(\hconcat\)''\mlabel{hconcat} operator, and may be extended by the whiskering operations, namely left whiskering \(j \lwhisk \blank : (f \htpy g) \to (j \comp f) \htpy (j \comp g)\)\mlabel{lwhisk} and right whiskering \(\blank \rwhisk k : (f \htpy g) \to (f \comp k) \htpy (g \comp k)\)\mlabel{rwhisk} for suitably typed maps \(j\) and \(k\). We allow ourselves to abuse notation so that for a homotopy \(H : f \htpy g\), the fiberwise transport \((a \mapsto (H(a) \tr)) : (a : A) \to P(f a) \to P(g a)\) is written as just \((H \tr)\).
\section{Commuting shapes and coherences}
\label{sec:diagrams}
In the latter parts of the paper we use diagrams where possible to explain constructions and theorems. Two kinds of diagrams are present: non-dependent ones, which express commutativity of non-dependent functions between types as homotopies, and commutativity of such homotopies; and dependent ones, which can express diagrams involving type families, fiberwise maps, dependent functions, their homotopies and homotopies of those homotopies, in a limited capacity.

Non-dependent diagrams are well known. We use the following to represent homotopies \(f \htpy g\), triangles \(g \comp f \htpy h\), and squares \(g \comp f \htpy k \comp h\):
\begin{center}
\begin{tikzcd}
A \arrow[r, bend left, "g"] \arrow[r, bend right, "f"'] & B
\end{tikzcd}
\hspace{2em}
\begin{tikzcd}[column sep=small, row sep=small]
A \arrow[rr, "h"] \arrow[dr, "f"'] && C \\
& B \arrow[ur, "g"']
\end{tikzcd}
\hspace{2em}
\begin{tikzcd}[sep=15pt]
A \arrow[r, "h"] \arrow[d, "f"'] & C \arrow[d, "k"] \\
B \arrow[r, "g"'] & D.
\end{tikzcd}
\end{center}
If dictated by context they may represent homotopies in the opposite direction, \ie \(g \htpy f\), \(h \htpy g \comp f\) and \(k \comp h \htpy g \comp f\).

One dimension higher, we talk about commuting (triangular) prisms
\begin{center}
\begin{tikzcd}[row sep=small]
  A'
  \arrow[dd, "h_A"']
  \arrow[rr, "f'"]
  \arrow[dr, "h'"', near end]
  && B'
  \arrow[dd, "h_B"]
  \arrow[dl, "g'", near end] \\
  & C' \\
  A
  \arrow[rr, "f", near start]
  \arrow[dr, "h"']
  && B,
  \arrow[dl, "g"] \\
  & C
  \arrow[from=uu, "h_C", near start, crossing over]
\end{tikzcd}
\end{center}
which are fillers of shapes composed of three squares \(L : h_C \comp h' \htpy h \comp h_A\), \(R : h_C \comp g' \htpy g \comp h_B\), \(H : f \comp h_A \htpy h_B \comp f'\) and two triangles \(U : g' \comp f' \htpy h'\) and \(D : g \comp f \htpy h\). The prism is then an element of the type \(L \hconcat ((D \rwhisk h_A) \hconcat (g \lwhisk H)) \htpy (h_C \lwhisk U) \hconcat (R \rwhisk f')\). It represents a proof that the two homotopies of type \(h_C \comp h' \htpy g \comp h_B \comp f'\), namely one composed of \(L\), \(D\) and \(H\), and the other of \(U\) and \(R\), are themselves homotopic.

Dependent diagrams are less standardized. In this paper we keep the convention that dependent diagrams are indicated by either containing downward facing arrows with two heads, or upwards facing arrows, unless stated otherwise. Double headed arrows represent type families --- a pair of type families \(P : A \to \UU\) and \(Q : B \to \UU\) would be drawn as
\begin{center}
\begin{tikzcd}
P \arrow[d, two heads] & Q \arrow[d, two heads] \\
A & B.
\end{tikzcd}
\end{center}
We can add horizontal arrows to the picture: an arrow at the bottom is a regular function between types \(f : A \to B\), and an arrow at the top lying over \(f\) represents a fiberwise function \(e\{a : A\}: P(a) \to Q(f(a))\).
\begin{center}
\begin{tikzcd}
P \arrow[d, two heads] \arrow[r, "e"] & Q \arrow[d, two heads] \\
A \arrow[r, "f"'] & B.
\end{tikzcd}
\end{center}
Notice two things: the codomain of \(e\) is drawn as \(Q\), not \(Q \comp f\), since the fact that \(e\) is over \(f\) is represented visually, and \(Q \comp f\) is not a type family over \(B\); and this square does not represent an inhabitable type, it just asserts the types of \(P\), \(Q\), \(f\) and \(e\).

We may also have two type families over the same type, in which case a fiberwise map
\begin{center}
\begin{tikzcd}[row sep=small]
P \arrow[dr, two heads] \arrow[rr, "e"] && Q \arrow[dl, two heads] \\
& A
\end{tikzcd}
\end{center}
is treated as lying over \(\id\), so it has the type \(e\{a\}: P(a) \to Q(a)\).

Adding another dimension, we can talk about homotopies over a homotopy \(H : f \htpy g\), or dependent homotopies:
\begin{center}
\begin{tikzcd}[row sep=25pt]
P \arrow[d, two heads] \arrow[r, bend left, "h"] \arrow[r, bend right, "e"' pos=0.4, "/"{marking, pos=0.7}] & Q \arrow[d, two heads] \\
A \arrow[r, bend left, "g"] \arrow[r, bend right, "f"'] & B
\end{tikzcd}
\hspace{20pt}
\begin{tikzcd}[row sep=25pt]
P \arrow[d, two heads] \arrow[r, bend left, "e" pos=0.4, "/"{marking, pos=0.7}] \arrow[r, bend right, "h"'] & Q \arrow[d, two heads] \\
A \arrow[r, bend left, "f"] \arrow[r, bend right, "g"'] & B.
\end{tikzcd}
\end{center}
Note the slashed arrows: the map \(e\{a\} : P(a) \to Q(f(a))\) is over \(f\) and \({h\{a\} : P(a) \to Q(g(a))}\) is over \(g\), so they cannot be homotopic, as they have different codomains. The slashed arrow indicates that we put an implicit transport along \(H\) after \(e\). In other words, a homotopy between \(e\) and \(h\) over \(H\) is a family of dependent paths \(H'\{a\}(p : P(a)): H(a) \tr e(p) = h(p)\). Depending on the orientation of the bottom homotopy, the slashed arrow may be on either end of the top homotopy. Note that these diagrams still only assert types of the top layer, so they scale seamlessly to dependent triangles and dependent squares, which look like prisms and cubes, but do not carry any proper coherence information. The slashed arrow notation was chosen to be minimally visually disruptive, since the transports are usually uninteresting for intuition, but the author wants them to be present in order to be faithful to the implementation.

If we want to add dependent functions to a diagram, we draw them pointing upwards. We can draw a pair of dependent functions \(s : (a : A) \to P(a)\) and \(t : (b : B) \to Q(b)\) as
\begin{center}
\begin{tikzcd}
P & Q \\
A \arrow[u, "s" pos=0.4] & B. \arrow[u, "t"' pos=0.4]
\end{tikzcd}
\end{center}
The only difference between dependent and non-dependent functions is the direction in which they point. In particular dependent functions do not have a binder \((a : A)\) and the codomain is not applied like \(P(a)\). If a diagram contains dependent functions we do not draw the double headed arrows, because bases of type families are clear from their layout.

We may once again add horizontal arrows, which gets us the diagram
\begin{center}
\begin{tikzcd}
P \arrow[r, "e"] & Q \\
A \arrow[u, "s" pos=0.4] \arrow[r, "f"'] & B, \arrow[u, "t"' pos=0.4]
\end{tikzcd}
\end{center}
where \(f : A \to B\) is a function and \(e\{a\} : P(a) \to Q(f(a))\) is a fiberwise function. This diagram does represent a type of homotopies, namely \(e \comp s \htpy t \comp f\) in the dependent function type \((a : A) \to Q(f(a))\). We call such homotopies ``commuting squares of sections''.

We may add a third dimension by considering homotopies of squares of sections which share the bottom and top maps, \eg consider the two squares on the left
\begin{center}
\begin{tikzcd}
P \arrow[r, "e"] & Q \\
A \arrow[u, "s" pos=0.45] \arrow[r, "f"'] \arrow[ur, phantom, "F"] & B \arrow[u, "t"' pos=0.4]
\end{tikzcd}
\hspace{10pt}
\begin{tikzcd}
P \arrow[r, "e"] & Q \\
A \arrow[u, "s'" pos=0.45] \arrow[r, "f"'] \arrow[ur, phantom, "N"] & B \arrow[u, "t'"' pos=0.4]
\end{tikzcd}
\hspace{25pt}
\begin{tikzcd}[column sep=60pt, row sep=25pt, cramped]
  P \arrow[r, "e"]
  \arrow[r, phantom, "F" pos=0.35, yshift=-12pt] & Q \\
  A \arrow[r, "f"'] \arrow[u, "s", bend left=45, ""{name=S0, anchor=center}] \arrow[u, "s'"', bend right=45, ""{name=T0, anchor=center}]
  \arrow[r, phantom, "N" pos=0.65, yshift=12pt] & B, \arrow[u, "t", bend left=45, ""{name=S1, anchor=center}] \arrow[u, "t'"', bend right=45, ""{name=T1, anchor=center}]
  \arrow[from=S0, to=T0, phantom, "L"]
  \arrow[from=S1, to=T1, phantom, "R"]
\end{tikzcd}
\end{center}
then a homotopy between them is a pair of homotopies \(L : s \htpy s'\) and \(R : t \htpy t'\), together with a coherence \(F \hconcat (R \rwhisk f) \htpy (e \lwhisk L) \hconcat N\), visualized as the cylinder on the right above.

Orthogonally, we may add the last dimension by considering homotopies and dependent homotopies. In that case we have two squares of sections on the left
\begin{center}
\begin{tikzcd}
P \arrow[r, "e"] & Q \\
A \arrow[u, "s" pos=0.4] \arrow[r, "f"'] \arrow[ur, phantom, "N"] & B \arrow[u, "t"' pos=0.4]
\end{tikzcd}
\hspace{10pt}
\begin{tikzcd}
P \arrow[r, "h"] & Q \\
A \arrow[u, "s" pos=0.4] \arrow[r, "g"'] \arrow[ur, phantom, "F"] & B \arrow[u, "t"' pos=0.4]
\end{tikzcd}
\hspace{25pt}
\begin{tikzcd}[row sep=25pt]
P \arrow[r, bend left, "h"] \arrow[r, bend right, "e"' pos=0.4, "/"{marking, pos=0.7}] & Q \\
A \arrow[r, bend left, "g"] \arrow[u, "s" pos=0.4] \arrow[r, bend right, "f"'] & B, \arrow[u, "t" pos=0.4]
\end{tikzcd}
\end{center}
a homotopy \(H : f \htpy g\) and a dependent homotopy \(H' : (H \tr) \comp e \htpy h\). The ``cylinder of sections'' is drawn as the cylinder on the right above, and represents the type \((a : A) \to H'(s(a)) \concat F(a) = \ap_{H(a) \tr}(N(a)) \concat \apd_t(H(a))\).

Cylinders of sections extend to prisms of sections and cubes of sections, but they are not judgmentally the same types. For example the prism of sections
\begin{center}
\begin{tikzcd}[row sep=small]
  & Q \arrow[dr, "g'"] \arrow[from=dd, "s", very near start] \\
  P \arrow[rr, crossing over, "/"{marking, near end}, "h'" near start]
  \arrow[ur, "f'"]
  & & R \\
  & B \arrow[dr, "g" pos=0.4] \\
  A \arrow[rr, "h"'] \arrow[uu, "t"]
  \arrow[ur, "f" pos=0.4]
  & & C \arrow[uu, "u"']
\end{tikzcd}
\end{center}
with bottom triangle \(H : h \htpy g \comp f\), top dependent triangle \(H' : (H\tr) \comp h' \htpy g' \comp f'\), left square \(L : f' \comp t \htpy s \comp f\), right square \(R : g' \comp s \htpy u \comp g\) and front square \(F : h' \comp t \htpy u \comp h\), represents the type of coherences \((a : A) \to H'(t(a)) \concat \ap_{g'}(L(a)) \concat R(f(a)) = \ap_{H(a)\tr}(F(a)) \concat \apd_u(H(a))\),
which needs associativity of path concatenation to have the type of a cylinder of sections.

The last shape we consider are cubes of sections
\begin{center}
\begin{tikzcd}[cramped]
  P \arrow[dr, "h'"'] \arrow[rr, crossing over, "f'"]
  & & Q \arrow[from=dd, "t" near start]
  \arrow[dr, "g'"] \\[-5pt]
  & R
  \arrow[rr, "/" {marking, near end}, crossing over, "k'" pos=0.3]
  & & S \\
  A \arrow[uu, "s"] \arrow[dr, "h"'] \arrow[rr, "f", pos=0.8]
  & & B \arrow[dr, "g"]
  \\[-5pt]
  & C \arrow[uu, crossing over, "u"' pos=0.2] \arrow[rr, "k"']
  & & D \arrow[uu, crossing over, "v"']
\end{tikzcd}
\end{center}
with bottom square \(H\), top dependent square \(H'\), left square \(L\), right square \(R\), far square \(F\) and near square \(N\). They represent the type
\begin{align*}
  (a : A) \to{}& H'(s(a)) \concat \ap_{g'}(F(a)) \concat R(f(a)) \\
  &= \ap_{(H(p) \tr) \comp k'}(L(a)) \concat \ap_{H(p) \tr}(N(h(a))) \concat \apd_v(H(a)),
\end{align*}
which is again equivalent to the type of cylinders of sections if we glued together the pairs of faces \(L, N\) and \(F, R\), but it is not judgmentally equal to it.

An alternative to this ``indexed'' encoding was considered, namely the ``displayed'' encoding where type families \(P : A \to \UU\) are replaced by types over \(A\), \ie a type \(\widetilde{P}\) and a map \(p : \widetilde{P} \to A\), and dependent maps \(s : (a : A) \to P(a)\) are replaced by maps \(\widetilde{s} : A \to \widetilde{P}\) equipped with a homotopy \(r : p \comp s \htpy \id\). These two encodings are equivalent by the (un)straigtening construction \cite{HTT}, and offer different trade-offs. The displayed encoding allows for precise diagrammatic reasoning, because dependent concepts are explained in terms of non-dependent ones, so all types and maps in diagrams become non-dependent. This should in theory also allow more code reuse in the formalization between dependent and non-dependent infrastructure. The disadvantage is that we miss out on some judgmental equalities --- unstraightening a dependent map \(s : (a : A) \to P(a)\) gives the map \((a \mapsto (a, s(a))) : A \to \Sigma (a : A). P(a)\), which composes with the retraction \(p : \Sigma (a : A). P(a) \to A\) to the identity map up to judgmental equality. However we cannot abstract over judgmental equalities, so infrastructure must abstract over arbitrary homotopies, which leads to proliferation of path algebra in library code. In a previous, unsuccessful attempt at proving the zigzag construction correct the author started building unstraightened infrastructure, but it also ended up having noticeably worse performance than the straightened one, though he did not investigate this in detail. Furthermore, unstraightening a diagram raises its dimension by one. For example, an unstraightening of a square of sections consists of the data of a commuting prism: the square showing that the top map lies over the bottom becomes a bona fide homotopy \(f \comp p \htpy q \comp \widetilde{e}\), and in addition to giving a commuting square \(\widetilde{e} \comp \widetilde{s} \htpy \widetilde{t} \comp f\) we need to prove that the two squares compose to \(\reflhtpy : f \comp \id \htpy \id \comp f\). Similarly, a cube of sections is represented by a certain hypercube. This data then needs to be taken into account in functoriality principles.

For those reasons the author chose to formalize the indexed infrastructure, and add dependent vocabulary to the diagram language.
\section{Pushouts}
\label{sec:pushouts}
Pushouts are colimits specified by span diagrams
\begin{tikzcd}[cramped]
A & S \arrow[l, "f"'] \arrow[r, "g"] & B.
\end{tikzcd}
In other words, given such a span diagram, its pushout is a type \(X\) with two point constructors, \(\inl : A \to X\) and \(\inr : B \to X\), and a path constructor \(\glue(s : S) : \inl(f(s)) = \inr(g(s))\). This description can be used directly to define pushouts in type theories with first-class HITs. The flavor of Agda used in the formalization does not support first-class HITs, so instead we define pushouts to be structures satisfying a certain universal property, which gives us an induction principle for them. When interpreting types as homotopy spaces, we may think of the pushout as two distinct components corresponding to copies of \(A\) and \(B\), to which we add (higher) paths between \(f(s)\) and \(g(s)\) for each \(s : S\).

Except for identity systems at the end of the section, we follow Rijke's development of descent for pushouts \cite[Chapter 2]{Rij19}. We summarize the definitions we use below.

\begin{defi}
Consider a span diagram
\begin{tikzcd}[cramped]
A & S \arrow[l, "f"'] \arrow[r, "g"] & B.
\end{tikzcd}

\begin{enumerate}[i.]
\item A \textbf{cocone} \href{https://archive.vojtechstep.eu/zigzag-construction/synthetic-homotopy-theory.cocones-under-spans.html\#cocones}{\ExternalLink} to a type \(X\) is a triple \((i, j, H)\), which consist of maps \(i : A \to X\) and \(j : B \to X\), and a homotopy \(H : i \comp f \htpy j \comp g\).
\item A \textbf{dependent cocone} \href{https://archive.vojtechstep.eu/zigzag-construction/synthetic-homotopy-theory.dependent-cocones-under-spans.html\#dependent-cocones}{\ExternalLink} to a type family \(P : X \to \UU\) over a cocone \((i, j, H)\) to \(X\) is a triple \((i', j', H')\), where \(i' : (a : A) \to P(i(a))\) and \(j' : (b : B) \to P(j(b))\) are dependent maps, and \(H'(s : S) : H(s)\tr i'(f(s)) = j'(g s)\) is a dependent homotopy. We denote the type of dependent cocones over a cocone \(c\) to a type family \(P\) by \(\depCoconePO(c, P)\)\mlabel{dep-cocone-po}.
\item The \textbf{cocone map} \href{https://archive.vojtechstep.eu/zigzag-construction/synthetic-homotopy-theory.cocones-under-spans.html\#postcomposing-cocones-under-spans-with-maps}{\ExternalLink} accepts a cocone \((i, j, H)\) to \(X\) and a function \(h : X \to Y\), and constructs the cocone \((h \comp i, h \comp j, s \mapsto \ap_h(H(s)))\) to \(Y\).
\item The \textbf{dependent cocone map} \href{https://archive.vojtechstep.eu/zigzag-construction/synthetic-homotopy-theory.dependent-cocones-under-spans.html\#postcomposing-dependent-cocones-with-maps}{\ExternalLink} accepts a cocone \((i, j, H)\) to \(X\) and a dependent map \(t (x : X): P(x)\), and constructs the dependent cocone \((t \comp i, t \comp j, s \mapsto \apd_t(H(s)))\) to \(P\). We denote the dependent cocone map by \(\depCoconeMapPO\)\mlabel{dep-cocone-map-po}.
\item A cocone \(c\) to \(X\) is a \textbf{pushout} if either its cocone map is an equivalence for all \(Y\) (the \textbf{universal property} \href{https://archive.vojtechstep.eu/zigzag-construction/synthetic-homotopy-theory.universal-property-pushouts.html\#the-universal-property-of-pushouts-1}{\ExternalLink}), or its dependent cocone map is an equivalence for all \(P\) (the \textbf{dependent universal property} \href{https://archive.vojtechstep.eu/zigzag-construction/synthetic-homotopy-theory.dependent-universal-property-pushouts.html\#the-dependent-universal-property-of-pushouts-1}{\ExternalLink}). In that case, we use the name \(\PO{S}{A}{B}\)\mlabel{PO} for \(X\), the names \(\inl\)\mlabel{inl} (``left point constructor'') and \(\inr\)\mlabel{inr} (``right point constructor'') for \(i\) and \(j\), and \(\glue\)\mlabel{glue} (``path constructor'') for \(H\). We refer to both \(X\) and the cocone as ``pushout''.
\item The cocone map of a pushout \(\PO{S}{A}{B}\) has an inverse which sends a cocone \(c\) to \(Y\) to a map \(\PO{S}{A}{B} \to Y\). This is called the \textbf{cogap map}~\href{https://archive.vojtechstep.eu/zigzag-construction/synthetic-homotopy-theory.universal-property-pushouts.html\#the-cogap-map}{\ExternalLink} of \(c\).
\item Similarly, the dependent map corresponding to a dependent cocone \(d\) to \(P\) is called the \textbf{dependent cogap map} \href{https://archive.vojtechstep.eu/zigzag-construction/synthetic-homotopy-theory.dependent-universal-property-pushouts.html\#the-dependent-cogap-map}{\ExternalLink} of \(d\), denoted \(\depCogapPO(d)\)\mlabel{dep-cogap-po}.
\item The type of \textbf{descent data} \href{https://archive.vojtechstep.eu/zigzag-construction/synthetic-homotopy-theory.descent-data-pushouts.html\#descent-data-for-pushouts-1}{\ExternalLink} is the type of triples \((P_A, P_B, P_S)\), with \(P_A : A \to \UU\), \(P_B : B \to \UU\) type families, and \(P_S\{s : S\} : P_A(f s) \simeq P_B(g s)\) a family of equivalences.
\item The type of \textbf{sections} \href{https://archive.vojtechstep.eu/zigzag-construction/synthetic-homotopy-theory.sections-descent-data-pushouts.html\#sections-of-descent-data-for-pushouts-1}{\ExternalLink} of descent data \((P_A, P_B, P_S)\), denoted \(\sectPO(P_A, P_B, P_S)\)\mlabel{sect-DD}, is the type of triples \((t_A, t_B, t_S)\), where \(t_A : (a : A) \to P_A(a)\) and \(t_B : (b : B) \to P_B(b)\) are dependent functions, and \(t_S(s) : P_S(t_A(f s)) = t_B(g s)\) is a homotopy.
\item The \textbf{total span diagram} \href{https://archive.vojtechstep.eu/zigzag-construction/synthetic-homotopy-theory.flattening-lemma-pushouts.html\#the-total-span-diagram-of-descent-data}{\ExternalLink} of descent data \((P_A, P_B, P_S)\) is the diagram\\
\begin{tikzcd}
\Sigma (a : A). P_A(a) & \Sigma (s : S). P_A(f s) \arrow[l, "\Sigma (f). \id"' outer sep=3pt] \arrow[r, "\Sigma (g). P_S" outer sep=3pt] & \Sigma (b : B). P_B(b)
\end{tikzcd}
\item For descent data \((P_A, P_B, P_S)\) and \((R_A, R_B, R_S)\), an \textbf{equivalence of descent data} \href{https://archive.vojtechstep.eu/zigzag-construction/synthetic-homotopy-theory.equivalences-descent-data-pushouts.html\#equivalences-of-descent-data-for-pushouts-1}{\ExternalLink} is a triple \((e_A, e_B, e_S)\), where \(e_A\{a\} : P_A(a) \simeq R_A(a)\) and \(e_B\{b\} : P_B(b) \simeq R_B(b)\) are fiberwise equivalences, and \(e_S\{s\} : e_B\{gs\} \comp P_S\{s\} \htpy R_S\{s\} \comp e_A\{fs\}\) is a family of commuting squares. We denote the type of equivalences by \((P_A, P_B, P_S) \equivDD (R_A, R_B, R_S)\)\mlabel{equiv-dd}.
\end{enumerate}
\end{defi}

Defining pushouts in terms of either the universal or the dependent universal property is justified, since the two propositions are equivalent.

\begin{lemC}[{\cite[Proposition 2.1.6]{Rij19}}]
\href{https://archive.vojtechstep.eu/zigzag-construction/synthetic-homotopy-theory.dependent-universal-property-pushouts.html\#the-nondependent-and-dependent-universal-property-of-pushouts-are-logically-equivalent}{\ExternalLink} A cocone satisfies the universal property of pushouts if and only if it satisfies the dependent universal property.
\end{lemC}

We include descent data in the definitions since it is the main approach to working with type families over pushouts in the formalization. Instead of treating descent as a subject of theoretical study, we use it as a practical tool for code organization. This technique is inspired by Rijke \cite[Section 22.2]{Rij22}. It is facilitated by the descent theorem, which establishes a one-to-one correspondence between type families over a pushout and descent data over its defining span.

\begin{thmC}[{\cite[Proposition 2.2.2]{Rij19}} (Descent)]
\href{https://archive.vojtechstep.eu/zigzag-construction/synthetic-homotopy-theory.descent-property-pushouts.html\#theorem}{\ExternalLink} The map taking a type family \(P\) over \(\PO{S}{A}{B}\) to the descent data \((P \comp \inl, P \comp \inr, (\glue \tr))\) is an equivalence.
\end{thmC}

The theory of descent tells us that to study behavior over pushouts, it suffices to study behavior over its two components which is in a sense ``coherent'' over the overlaps induced by \(S\). In the case of type families, the behavior on components is captured by \(P_A\) and \(P_B\), while the coherence \(P_S\) ensures that \(P_A\) and \(P_B\) ``behave the same'' (are equivalent) when restricted to the points \(f(s)\) and \(g(s)\), respectively, connected by \(\glue(s)\). For an equivalence of descent data, the coherence is \(e_S\), which states that the fiberwise equivalences \(e_A\), \(e_B\) are compatible with the transition maps \(P_S\), \(R_S\). With some setup, we will show that equivalences of descent data correspond in a precise sense to fiberwise equivalences of the corresponding type families.

\begin{lem}
\href{https://archive.vojtechstep.eu/zigzag-construction/synthetic-homotopy-theory.sections-descent-data-pushouts.html\#sections-of-families-over-a-pushout-correspond-to-sections-of-the-corresponding-descent-data}{\ExternalLink} Consider a type family \(P : \PO{S}{A}{B} \to \UU\), descent data \((P_A, P_B, P_S)\), and an equivalence of descent data \((e_A, e_B, e_S)\) between \((P \comp \inl, P \comp \inr, (\glue \tr))\) and \((P_A, P_B, P_S)\). Then there is an equivalence between the type of dependent maps \((x : \PO{S}{A}{B}) \to P(x)\) and the sections \(\sectPO(P_A, P_B, P_S)\).
\label{lem:sect-sect-descent-data}
\end{lem}

\begin{proof}
Construct the map by taking a dependent map \(h : (x : \PO{S}{A}{B}) \to P(x)\) to the triple \((h_A, h_B, h_S)\), where \(h_A(a : A) \defeq e_A(h(\inl(a)))\), similarly \(h_B(b : B) \defeq e_B(h(\inr(b)))\). Then \(h_S(s : S)\) is supposed to be an identification \(P_S(e_A(h(\inl(f s)))) = e_B(h(\inr(g s)))\), and we obtain it by composing \(e_S(h(\inl(f s)))^{-1} : P_S(e_A(h(\inl(f s)))) = e_B(\glue(s) \tr h(\inl(f s)))\) and \(\ap_{e_B}(\apd_h(\glue(s))) : e_B(\glue(s) \tr h(\inl(f s))) = e_B(h(\inr(g s)))\).

We factor this map as
\(((x : \PO{S}{A}{B}) \to P(x)) \simeq \depCoconePO(\PO{S}{A}{B}, P) \simeq \sectPO(P_A, P_B, P_S)\), where the first map is the dependent cocone map, and the second takes a triple \((i', j', H')\) to \((e_A \comp i', e_B \comp j', s \mapsto e_S(i'(f(s))))^{-1} \concat \ap_{e_B}(H'(s))\). Since \(\PO{S}{A}{B}\) is the pushout, we have that the first map is an equivalence. The second map acts by equivalences component-wise: postcomposing by the equivalences \(e_A\) and \(e_B\) is an equivalence, left whiskering by the equivalence \(e_S\) is an equivalence, and concatenating with an identification is an equivalence. It follows that the second map is an equivalence as well, so the composite is an equivalence.
\end{proof}

\begin{lem}
\href{https://archive.vojtechstep.eu/zigzag-construction/synthetic-homotopy-theory.descent-data-equivalence-types-over-pushouts.html\#sections-of-descent-data-for-families-of-equivalences-correspond-to-equivalences-of-descent-data}{\ExternalLink} Consider type families \(P\) and \(R\) over the pushout \(\PO{S}{A}{B}\), descent data \((P_A, P_B, P_S)\) and \((R_A, R_B, R_S)\), and equivalences of descent data \((e^P_A, e^P_B, e^P_S) : (P \comp \inl, P \comp \inr, (\glue \tr)) \equivDD (P_A, P_B, P_S)\), and \((e^R_A, e^R_B, e^R_S) : (R \comp \inl, R \comp \inr, (\glue \tr)) \equivDD (R_A, R_B, R_S)\). Then there is an equivalence between the type of fiberwise equivalences \((x : \PO{S}{A}{B}) \to P(x) \simeq R(x)\) and equivalences of descent data \((P_A, P_B, P_S) \equivDD (R_A, R_B, R_S)\).

Additionally, for a fiberwise equivalence \(h\{x : \PO{S}{A}{B}\} : P(x) \simeq R(x)\) and its image \((h_A, h_B, h_S)\) under this map, we have the computation rules \(h_A(e^P_A(x)) = e^R_A(h(x))\) for all \(x : P(\inl(a))\), and \(h_B(e^P_B(x)) = e^R_B(h(x))\) for all \(x : P(\inr(b))\).
\label{lem:equiv-equiv-descent-data}
\end{lem}

\begin{proof}
We begin by describing the descent data corresponding to the type family \(x \mapsto (P(x) \simeq R(x))\). We claim that it is the descent data consisting of type families \(a \mapsto P_A(a) \simeq R_A(a)\) and \(b \mapsto P_B(b) \simeq R_B(b)\), together with the equivalence \(h \mapsto R_S \comp h \comp P_S^{-1}\) of type \(\{s : S\} \to (P_A(f s) \simeq R_A(f s)) \simeq (P_B(g s) \simeq R_B(g s))\). The claim is verified by providing equivalences \(e_A\{a : A\} : (P(\inl(a)) \simeq R(\inl(a))) \simeq (P_A(a) \simeq R_A(a))\) and \(e_B\{b : B\} : (P(\inr(b)) \simeq R(\inr(b))) \simeq (P_B(b) \simeq R_B(b))\), together with a coherence \(e_S\{s : S\}\) filling the square
\begin{center}
\begin{tikzcd}
  (P(\inl(f s)) \simeq R(\inl(f s)))
  \arrow[r, "e_A"]
  \arrow[d, "\glue(s) \tr"']
  & (P_A(f s) \simeq R_A(g s))
  \arrow[d, "R_S \comp \blank \comp P_S^{-1}"]
  \\
  (P(\inr(g s)) \simeq R(\inr(g s)))
  \arrow[r, "e_B"']
  & (P_B(g s) \simeq R_B(g s)).
\end{tikzcd}
\end{center}

The equivalences are defined as \(e_A(h) \defeq e^R_A \comp h \comp (e^P_A)^{-1}\) and \(e_B(h) \defeq e^R_B \comp h \comp (e^P_B)^{-1}\). Filling the square requires us to use function extensionality and characterization of transports in the family of equivalence types à la \cite[Equation 2.9.4]{UF13}. Applying the two, we are asked to find a homotopy
\begin{center}
\begin{tikzcd}[column sep=0pt]
  &[1em] P_A(f s)
  \arrow[dr, "(e^P_A)^{-1}"]
  &
  &[2em]
  & R_A(f s)
  \arrow[dr, "R_S"]
  &[0.5em]
  \\
  P_B(g s)
  \arrow[ur, "P_S^{-1}"]
  \arrow[dr, "(e^P_B)^{-1}"']
  &
  & P(\inl(f s))
  \arrow[r, "h"]
  & R(\inl(f s))
  \arrow[ur, "e^R_A"]
  \arrow[dr, "\glue(s) \tr"']
  &
  & R_B(g s)
  \\
  & P(\inr(g s))
  \arrow[ur, "(\glue(s) \tr)^{-1}"']
  &
  &
  & R(\inr(g s))
  \arrow[ur, "e^R_B"']
\end{tikzcd}
\end{center}
for every fiberwise equivalence \(h\{s : S\} : P(\inl(f s)) \simeq R(\inl(f s))\). The right square is exactly \(e^R_S\), while the left square is \(e^P_S\) with the equivalences along its boundary inverted.

It follows from \autoref{lem:sect-sect-descent-data} that the type of fiberwise equivalences \((x : \PO{S}{A}{B}) \to P(x) \simeq R(x)\) is equivalent to sections of the above descent data. We finish the proof by showing that this type of sections is equivalent to equivalences of descent data. Given such a section \((h_A, h_B, h_S)\), notice that \(h_A\) and \(h_B\) already have the expected type for them to be used in an equivalence of descent data. What is left is giving an equivalence between the type of coherences \(R_S \comp h_A(f s) \comp P_S^{-1} = h_B(g s)\) of the section and coherences \(h_B(g s) \comp P_S \htpy R_S \comp h_A(f s)\) of the equivalence, for all \(s : S\). This equivalence is obtained from function extensionality and transposition of \(P_S^{-1}\) to the other side of the homotopy.

To prove the computation rules, calculate that the first projection of the image of a fiberwise equivalence \(h\{x : \PO{S}{A}{B}\} : P(x) \simeq R(x)\) is \(h_A \judeq e_A^R \comp h \comp (e^P_A)^{-1}\), so the desired homotopy \(h_A \comp e^P_A \htpy e^R_A \comp h\) is constructed by canceling out the pair \((e^P_A)^{-1} \comp e^P_A\) on the left-hand side. Similarly for the second computation rule.
\end{proof}

By leveraging descent we are able to factor out proofs by a combination of pushout induction and univalence. This is useful for developments without computational HITs and univalence, since we can rebuild constructions over type families to be based on descent data instead, prove the two to be equivalent up to gluing the descent data into a type family, and then stay in the more computational ``local'' setting. We have already shown that type families correspond to descent data and fiberwise equivalences correspond to equivalences of descent data, and we proceed to introduce identity systems based on descent data.
\subsection{Identity systems}
\label{sec:org97e24b4}

In order to show that the zigzag construction is equivalent to the path spaces of pushouts, we introduce identity systems of pushouts. This is a restatement of the induction principle for pushout equality by Kraus and von Raumer \cite{KvR19}, defined to mirror the definition of identity systems for types, \eg \cite[Definition 11.2.1]{Rij22}. We recall the definition of identity systems.

\begin{defi}
A type family \(P : X \to \UU\) over a pointed type \((X, x_0)\) is an \textbf{identity system}~\href{http://archive.vojtechstep.eu/zigzag-construction/foundation.identity-systems.html\#the-predicate-of-being-an-identity-system}{\ExternalLink} at \(p_0 : P(x_0)\) if for all type families \(Q : (\Sigma (x : X). P(x)) \to \UU\), the evaluation map \({\evReflIdSystemTy(h) \defeq h(x_0, p_0) : ((u : \Sigma(x : X). P(x)) \to Q(u)) \to Q(x_0, p_0)}\)\label{ev-refl-id-system} has a section, in the sense of a converse map \(\indidsystemDD\) such that \(\evReflIdSystemTy \comp \indidsystemDD \htpy \id\).
\label{defn:identity-system}
\end{defi}

To express identity systems of descent data, take \(X\) to be a pushout. Then we may replace \(P\) with descent data, and \(x_0\) and \(p_0\) with points in a chosen component, \eg \(a_0 : A\) and \(p_0 : P_A(a_0)\). To translate \(Q\) and its sections, we make use of the flattening lemma, which states that the \(\Sigma\) type over the pushout is itself a pushout.

\begin{lemC}[{\cite[Proposition 1.9.2]{Bru16}} (Flattening)]
\href{https://archive.vojtechstep.eu/zigzag-construction/synthetic-homotopy-theory.flattening-lemma-pushouts.html\#proof-of-the-descent-data-statement-of-the-flattening-lemma}{\ExternalLink}
Given descent data \((P_A, P_B, P_S)\), a type family \(P\) over the pushout \(\PO{S}{A}{B}\), and an equivalence of descent data \((e_A, e_B, e_S)\) between \((P_A, P_B, P_S)\) and \((P \comp \inl, P \comp \inr, (\glue \tr))\), we have that the pushout of the total span diagram of \((P_A, P_B, P_S)\) is the cocone \((\Sigma(\inl). e_A, \Sigma(\inr). e_B, \Sigma(\glue). e_S^{-1})\). In particular, the target type is \(\Sigma(x : \PO{S}{A}{B}). P(x)\).
\label{lem:flattening}
\end{lemC}

It follows that an appropriate analogue of \(Q\) is descent data over the total span diagram. To differentiate between descent data over the base span diagram and descent data over the total one, we put a \(\Sigma\) in the subscripts of the latter, \ie the components are called \(Q_{\Sigma A}\), \(Q_{\Sigma B}\), and \(Q_{\Sigma S}\). This is a purely notational device.

\begin{defi}
Descent data \((P_A, P_B, P_S)\) over a span with a point \(a_0 : A\) is an \textbf{identity system} \href{https://archive.vojtechstep.eu/zigzag-construction/synthetic-homotopy-theory.identity-systems-descent-data-pushouts.html\#the-predicate-of-being-an-identity-system-on-descent-data-for-pushouts}{\ExternalLink} at \(p_0 : P_A(a_0)\) if for all descent data \((Q_{\Sigma A}, Q_{\Sigma B}, Q_{\Sigma S})\) over its total span, the evaluation map \(\evReflIdSystemDD(t_A, t_B, t_S) \defeq t_A(a_0, p_0) : \sectPO(Q_{\Sigma A}, Q_{\Sigma B}, Q_{\Sigma B}) \to Q_{\Sigma A}(a_0, p_0)\)\mlabel{ev-refl-id-system-DD}
has a section.
\end{defi}

Just like the based identity types are a canonical example of identity systems, we have a canonical identity system of descent data.

\begin{constr}
\href{https://archive.vojtechstep.eu/zigzag-construction/synthetic-homotopy-theory.identity-systems-descent-data-pushouts.html\#the-canonical-descent-data-for-families-of-identity-types-is-an-identity-system}{\ExternalLink}
For a point \(a_0 : A\), define the descent data \((I_A, I_B, I_S)\) by posing \(I_A(a) \defeq (\inl(a_0) = \inl(a))\), \(I_B(b) \defeq (\inl(a_0) = \inr(b))\), and \(I_S\{s\}(p) \defeq p \concat \glue(s)\).

By computation of transports in based identity types \cite[Lemma 2.11.2]{UF13}, this descent data is equivalent to the descent data induced by the type family \(I(x) \defeq (\inl(a_0) = x)\).
\end{constr}

The formalization shows that a pointed type family is an identity system if and only if the induced pointed descent data is an identity system, but for this paper we limit ourselves to the following theorem, which combines a proof of \((I_A, I_B, I_S)\) being an identity system with unique uniqueness of initial objects:

\begin{thm}
\href{https://archive.vojtechstep.eu/zigzag-construction/synthetic-homotopy-theory.identity-systems-descent-data-pushouts.html\#unique-uniqueness-of-identity-systems}{\ExternalLink}
Consider a span diagram with a point \(a_0 : A\). For any identity system \((P_A, P_B, P_S)\) at \(p_0 : P_A(a_0)\), there is a unique triple \((e_A, e_B, e_S)\) consisting of
\begin{alignat*}{2}
  &e_A \{a : A\}: (\inl(a_0) = \inl(a)) \simeq P_A(a)  &\qquad{}e_B \{b : B\}&: (\inl(a_0) = \inr(b)) \simeq P_B(b) \\
  &\multispan{3}{\hfill$e_S \{s : S\}: (p : \inl(a_0) = \inl(fs)) \to e_B(p \concat (Hs)) = P_S(e_A(p))$\hfill}
\end{alignat*}
such that \(e_A(\refl) = p_0\).
\label{thm:unique-uniqueness-id-system}
\end{thm}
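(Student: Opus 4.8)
The plan is to transport the statement, via the descent theorem and the flattening lemma, into a statement about type families over \(A \sqcup_S B\), where it reduces to the standard uniqueness of identity systems, and then to transport back. Write \(I(x) \defeq (\inl(a_0) = x)\) for the based path family, so that by the Construction above \((I_A, I_B, I_S)\) is the descent data induced by \(I\); and by the descent theorem pick a type family \(P : A \sqcup_S B \to \UU\) whose induced descent data comes with an equivalence to \((P_A, P_B, P_S)\), with \(\tilde p_0 : P(\inl(a_0))\) corresponding to \(p_0\). Using univalence and the structure identity principle (and that the descent theorem, being an equivalence, induces one on identity types), the type of triples \((e_A, e_B, e_S)\) in the statement becomes equivalent to the type of fiberwise equivalences \(\phi\{x\} : I(x) \simeq P(x)\), via \(e_A\{a\} = \phi\{\inl(a)\}\), \(e_B\{b\} = \phi\{\inr(b)\}\), and \(e_S\) the coherence with the transition maps induced by \(\phi\); under this correspondence the condition \(e_A(\refl) = p_0\) becomes \(\phi(\refl) = \tilde p_0\). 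So it suffices to show that the type of pairs \((\phi, \alpha)\) with \(\phi : (x : A \sqcup_S B) \to I(x) \simeq P(x)\) and \(\alpha : \phi(\refl) = \tilde p_0\) is contractible.

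First I would forget the equivalence requirement. By path induction --- equivalently, because \(\evReflIdSystem\) is an equivalence for the canonical identity system on \(I\) --- the type of fiberwise \emph{maps} \((x : A \sqcup_S B) \to I(x) \to P(x)\) is equivalent to \(P(\inl(a_0))\) via \(\phi \mapsto \phi(\refl)\), so the type of pairs \((\phi, \alpha)\) with \(\phi\) merely a fiberwise map is equivalent to the based path space \(\sum_{u : P(\inl(a_0))}(u = \tilde p_0)\), which is contractible. Since being a fiberwise equivalence is a proposition, it remains only to check that every fiberwise map \(I \to P\) is automatically a fiberwise equivalence; once this is known, the type we want is equivalent to the type of basepoint-preserving fiberwise maps and hence contractible.

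For that last step I would unfold the identity-system hypothesis on \((P_A, P_B, P_S)\) through flattening: flattening identifies the pushout of the total span of \((P_A, P_B, P_S)\) with \(\sum_{x} P(x)\), hence identifies \(\sectDD\) of descent data over that total span with dependent functions on \(\sum_{x} P(x)\), and the evaluation \(\evReflIdSystem\) at \((a_0, p_0)\) with evaluation at \((\inl(a_0), \tilde p_0)\). So the hypothesis says that for every \(\bar Q : (\sum_{x} P(x)) \to \UU\) the map \(h \mapsto h(\inl(a_0), \tilde p_0)\) has a section; taking \(\bar Q(w) \defeq ((\inl(a_0), \tilde p_0) = w)\) and feeding \(\refl\) to the section exhibits \(\sum_{x} P(x)\) as contractible with centre \((\inl(a_0), \tilde p_0)\). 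As \(\sum_{x} I(x) = \sum_{x} (\inl(a_0) = x)\) is also contractible, for any fiberwise map \(\phi\) the induced map \(\tot(\phi) : \sum_{x} I(x) \to \sum_{x} P(x)\) is a map between contractible types, hence an equivalence; and a fiberwise map is a fiberwise equivalence exactly when its induced map on total spaces is one \citep{Rij22}, so \(\phi\) is a fiberwise equivalence. Transporting the resulting contractibility back along the equivalences of the first paragraph then yields the unique triple \((e_A, e_B, e_S)\) with \(e_A(\refl) = p_0\).

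The underlying mathematics is just the classical uniqueness of identity systems, so I expect the main obstacle to be the translation layer: arranging the correspondence of the first paragraph precisely enough that the basepoint condition and the coherence \(e_S\) genuinely match the basepoint-preserving fiberwise equivalence and its induced coherence, and pushing the identity-system hypothesis through flattening to contractibility of \(\sum_{x} P(x)\). One could instead stay within descent data throughout --- building \((e_A, e_B, e_S)\) from the canonical identity system \((I_A, I_B, I_S)\) applied to the descent data over its total span with \(Q_{\Sigma A}(a, p) \defeq P_A(a)\), \(Q_{\Sigma B}(b, p) \defeq P_B(b)\), \(Q_{\Sigma S}\{(s, p)\} \defeq P_S\{s\}\) and point \(p_0\), then proving the result is an equivalence of descent data and unique using descent-data analogues of the two facts above --- but I expect the type-family route to be the shorter one.
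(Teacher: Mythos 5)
Your proposal matches the paper's proof in all essentials: both pass through descent (and flattening) to reduce the statement to the unique existence of a pointed fiberwise equivalence \((\inl(a_0) = x) \simeq P(x)\) over the pushout, and both obtain that uniqueness from the fact that the type family \(P\) inherits the identity-system property from \((P_A, P_B, P_S)\). The only difference is presentational: where the paper establishes that \(P\) is an identity system via the commuting square with the dependent cocone map and then cites the fundamental theorem of identity types, you inline the relevant implication of that theorem --- contractibility of the type of pointed fiberwise maps by path induction, plus contractibility of \(\Sigma (x : A \sqcup_S B).\, P(x)\) (extracted from the hypothesis at the based path family of the total space) forcing every fiberwise map to be an equivalence --- which is correct but re-derives a standard lemma.
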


\begin{proof}
By descent, there is a unique type family \(P\) over \(\PO{S}{A}{B}\) equipped with an equivalence of descent data \((d_A, d_B, d_S)\) between \((P \comp \inl, P \comp \inr, (\glue \tr))\) and \((P_A, P_B, P_S)\). The data we want to construct is an equivalence \((e_A, e_B, e_S)\) between \((I_A, I_B, I_S)\) and \((P_A, P_B, P_S)\) such that \(e_A\{a_0\}(\refl) = p_0\). By \autoref{lem:equiv-equiv-descent-data} the type of equivalences is equivalent to the type of fiberwise equivalences \(e\{x\} : (\inl(a_0) = x) \simeq P(x)\), and the side condition is equivalent to requiring \(e\{\inl(a_0)\}(\refl) = d_A^{-1}(p_0)\). By the fundamental theorem of identity types \cite[Theorem 11.2.2]{Rij22}, there is a unique such equivalence if and only if \(P\) is an identity system at \(d_A^{-1}(p_0)\). So consider a type family \(Q_{\Sigma}\) over \(\Sigma(x : \PO{S}{A}{B}). P(x)\) and let us find a section to the evaluation map \(\evReflIdSystemTy\). The type family induces the descent data \((Q_{\Sigma A}, Q_{\Sigma B}, Q_{\Sigma S})\), which is defined as
\begin{alignat*}{2}
  &Q_{\Sigma A}(a, p) \defeq Q_{\Sigma}(\inl(a), d_A^{-1}(p))  &\qquad{}Q_{\Sigma B}(b, p)&\defeq Q_{\Sigma}(\inr(b), d_B^{-1}(p)) \\
  &\multispan{3}{\hfill$Q_{\Sigma S}(s, p) \defeq (\glue(s), d'_S(p)) \tr : Q_{\Sigma A}(f s, p) \simeq Q_{\Sigma B}(g s, P_S(p))$\hfill}
\end{alignat*}
where \(d'_S\{s\} : (\glue(s) \tr) \comp d_A^{-1}\{f s\} \htpy d_B^{-1}\{g s\} \comp P_S\{s\}\) is obtained from the commuting square \(d_S\{s\}^{-1} : d_B\{g s\} \comp (\glue(s) \tr) \htpy P_S\{s\} \comp d_A\{f s\}\) by transposing the equivalences \(d_A\) and \(d_B\) along the homotopy.

Observe that \(Q_{\Sigma A}\) fits into the commuting diagram
\phantomsection
\label{eqn:id-system-square}
\begin{center}
\begin{tikzcd}
  ((u : \Sigma (x : \PO{S}{A}{B}). P(x)) \to Q_{\Sigma} (u))
  \arrow[r, outer sep=3pt, "\evReflIdSystemTy"]
  \arrow[d, "\depCoconeMapPO"', "\simeq"]
  & Q_{\Sigma}(\inl(a_0), d^{-1}(p_0))
  \arrow[d, "\id", "\simeq"'] \\
  \sectPO(Q_{\Sigma A}, Q_{\Sigma B}, Q_{\Sigma S})
  \arrow[r, "\evReflIdSystemDD"']
  & Q_{\Sigma A}(a_0, p_0),
\end{tikzcd}
\end{center}
where the left map is an equivalence by \autoref{lem:flattening}. By the assumption that \((P_A, P_B, P_S)\) is an identity system at \(p_0\), we get that the bottom map has a section. We conclude that the top map is a conjugation of the bottom map by equivalences, so it has a section as well.
\end{proof}

We are going to use \autoref{thm:unique-uniqueness-id-system} at the end of \autoref{sec:idpo} to obtain the equivalences between path spaces and the zigzag construction.
\section{Sequential colimits}
\label{sec:seqcol}
In this section, we treat sequential colimits in homotopy type theory, following Sojakova, van Doorn and Rijke \cite{SvDR20}. We use different names for core concepts to stay consistent with the rest of the agda-unimath library --- mainly ``sequences'', ``natural transformations of sequences'' and ``fibered sequences'' are renamed to ``sequential diagrams'', ``morphisms of sequential diagrams'' and ``dependent sequential diagrams''. Apart from the necessary definitions to define the zigzag construction, the main goal is to prove \autoref{lem:preserves-cubes}, which states that a sequence of cubes of sections induces a square of sections in the colimit.

\begin{defi}
A \textbf{sequential diagram} \href{https://archive.vojtechstep.eu/zigzag-construction/synthetic-homotopy-theory.sequential-diagrams.html\#definition}{\ExternalLink} is a pair \((A, a)\), consisting of a family of types \(A : \N \to \UU\), and a connecting family of maps \(a_n : A_n \to A_{n+1}\). When the maps are clear from the context, we use \(A_{\bullet}\) for the sequential diagram.

Consider a sequential diagram \((A, a)\).
\begin{enumerate}[i.]
\item A \textbf{cocone} \href{https://archive.vojtechstep.eu/zigzag-construction/synthetic-homotopy-theory.cocones-under-sequential-diagrams.html\#cocones-under-sequential-diagrams-1}{\ExternalLink} to a type \(X\) is a pair \((i, H)\), which consists of a family of maps \(i_n : A_n \to X\) and a family of homotopies \(H_n : i_n \htpy i_{n+1} \comp a_n\).
\item A \textbf{dependent cocone} \href{https://archive.vojtechstep.eu/zigzag-construction/synthetic-homotopy-theory.dependent-cocones-under-sequential-diagrams.html\#dependent-cocones-under-sequential-diagrams-1}{\ExternalLink} to a type family \(P : X \to \UU\) over a cocone \((i, H)\) to \(X\) is a pair \((i', H')\), consisting of a family of dependent maps \(i'_n : (a : A_n) \to P(i_n(a))\), and a family of dependent homotopies \(H'_n : (H_n \tr) \comp i'_n \htpy i'_{n+1} \comp a_n\).
\item The \textbf{cocone map} \href{https://archive.vojtechstep.eu/zigzag-construction/synthetic-homotopy-theory.cocones-under-sequential-diagrams.html\#postcomposing-cocones-under-a-sequential-diagram-with-a-map}{\ExternalLink} accepts a cocone \((i, H)\) to \(X\) and a function \(h : X \to Y\), and constructs the cocone \((n \mapsto h \comp i_n, n\,a \mapsto \ap_h(H_n(a)))\).
\item The \textbf{dependent cocone map} \href{https://archive.vojtechstep.eu/zigzag-construction/synthetic-homotopy-theory.dependent-cocones-under-sequential-diagrams.html\#obtaining-dependent-cocones-under-sequential-diagrams-by-postcomposing-cocones-under-sequential-diagrams-with-dependent-maps}{\ExternalLink}, written \(\depCoconeMapSC\)\mlabel{dep-cocone-map-sc}, accepts a cocone \((i, H)\) to \(X\) and a dependent map \(s : (x : X) \to P(x)\), and constructs the dependent cocone \((n \mapsto s \comp i_n, n \, a \mapsto \apd_s(H_n(a)))\).
\item A cocone \(c\) to \(X\) is a \textbf{sequential colimit} if either its cocone map is an equivalence for all \(Y\) (the \textbf{universal property} \href{https://archive.vojtechstep.eu/zigzag-construction/synthetic-homotopy-theory.universal-property-sequential-colimits.html\#the-universal-property-of-sequential-colimits-1}{\ExternalLink}), or its dependent cocone map is an equivalence for all \(P\) (the \textbf{dependent universal property} \href{https://archive.vojtechstep.eu/zigzag-construction/synthetic-homotopy-theory.dependent-universal-property-sequential-colimits.html\#the-dependent-universal-property-of-sequential-colimits-1}{\ExternalLink}). By convention, we call the target type \(A_{\infty}\), the \(i_n\) maps \(\inn_n\)\mlabel{inn} (``point constructors''), and the \(H_n\) homotopies \(\kapn_n\)\mlabel{kapn} (``path constructors''). If we talk about multiple colimits, we distinguish their constructors by an additional index, \eg \(\inn_A^n\) and \(\inn_B^n\). We refer to both \(X\) and the cocone as ``sequential colimit''.
\item The type of \textbf{dependent sequential diagrams} \href{https://archive.vojtechstep.eu/zigzag-construction/synthetic-homotopy-theory.dependent-sequential-diagrams.html\#dependent-sequential-diagrams-1}{\ExternalLink} over \((A, a)\) is the type of pairs \((P, p)\), with \(P_n : A_n \to \UU\) a family of type families, and \(p_n\{a : A_n\} : P_n(a) \to P_{n+1}(a_n(a))\) a family of fiberwise maps. When the maps are clear from the context, we denote the dependent sequential diagram \(P_{\bullet}\).
\item The type of \textbf{sections} \href{https://archive.vojtechstep.eu/zigzag-construction/synthetic-homotopy-theory.dependent-sequential-diagrams.html\#sections-of-dependent-sequential-diagrams}{\ExternalLink} of a dependent sequential diagram \((P, p)\) is the type of pairs \((s, K)\), consisting of a family of dependent functions \(s_n : (a : A_n) \to P_n(a)\) and a family of squares of sections \(K_n\), visualized in \autoref{subfig:coherence-sect}.
\item The identity type of sections \((t, L) = (s, K)\) is characterized by the type of \textbf{homotopies}~\href{https://archive.vojtechstep.eu/zigzag-construction/synthetic-homotopy-theory.dependent-functoriality-sequential-colimits.html\#characterization-of-identity-types-of-sections-of-descent-data}{\ExternalLink} of sections, which are families of homotopies \(F_n : t_n \htpy s_n\), equipped with a family of coherences described in \autoref{subfig:coherence-htpy}.
\item A \textbf{morphism}~\href{https://archive.vojtechstep.eu/zigzag-construction/synthetic-homotopy-theory.morphisms-sequential-diagrams.html\#morphisms-of-sequential-diagrams-1}{\ExternalLink} from \((A, a)\) to \((B, b)\) is a pair \((f, H)\), where \(f_n : A_n \to B_n\) is a family of functions, and \(H_n : b_n \comp f_n \htpy f_{n+1} \comp a_n\) is a family of commuting squares. We can construct the identity morphisms~\href{https://archive.vojtechstep.eu/zigzag-construction/synthetic-homotopy-theory.morphisms-sequential-diagrams.html\#the-identity-morphism-of-sequential-diagrams}{\ExternalLink} \((\id, \reflhtpy)\), and morphisms compose~\href{https://archive.vojtechstep.eu/zigzag-construction/synthetic-homotopy-theory.morphisms-sequential-diagrams.html\#composition-of-morphisms-of-sequential-diagrams}{\ExternalLink} in the expected way.
\item For a dependent sequential diagram \((P, p)\) over \((A, a)\) and a dependent sequential diagram \((Q, q)\) over \((B, b)\), the type of \textbf{fiberwise morphisms} \href{https://archive.vojtechstep.eu/zigzag-construction/synthetic-homotopy-theory.dependent-functoriality-sequential-colimits.html\#fiberwise-morphisms-of-dependent-diagrams}{\ExternalLink} from \(P_{\bullet}\) to \(Q_{\bullet}\) over a morphism \((f, H) : A_{\bullet} \to B_{\bullet}\) is the type of pairs \((g, G)\), where \(g_n\{a : A_n\} : P_n(a) \to Q_n(f_n(a))\) is a family of fiberwise maps, and \(G_n\{a:A_n\} : (H_n(a) \tr) \comp q_n\{f_n(a)\} \comp g_n\{a\} \htpy g_{n+1}\{a_n(a)\} \comp p_n\{a\}\) is a family of dependent squares over \(H_n\), visualized in \autoref{subfig:coherence-fibhom}.
\item The \textbf{shift} \href{https://archive.vojtechstep.eu/zigzag-construction/synthetic-homotopy-theory.shifts-sequential-diagrams.html\#shifts-of-sequential-diagrams-1}{\ExternalLink} \((A_{+1}, a_{+1})\) is the sequential diagram obtained from \((A, a)\) by forgetting the first type and map, \ie \((A_{+1})(n) \defeq A_{n+1}\) and \((a_{+1})(n) \defeq a_{n+1}\). We write \(A_{\bullet+1}\) for the shift of \(A_{\bullet}\).
\end{enumerate}
\end{defi}

\begin{figure}
\begin{subfigure}{.25\textwidth}
\begin{tikzcd}[cramped]
  P_n \arrow[r, "p_n"] & P_{n+1} \\
  A_n \arrow[u, "s_n" pos=0.4] \arrow[r, "a_n"'] \arrow[ru, phantom, "K_n"] & A_{n+1} \arrow[u, "s_{n+1}"' pos=0.4]
\end{tikzcd}
\centering\caption{}
\label{subfig:coherence-sect}
\end{subfigure}
\begin{subfigure}{.39\textwidth}
\begin{tikzcd}[column sep=60pt, row sep=25pt, cramped]
  P_n \arrow[r, "p_n"]
  \arrow[r, phantom, "L_n" pos=0.3, yshift=-12pt] & P_{n+1} \\
  A_n \arrow[r, "a_n"'] \arrow[u, "t_n", bend left, ""{name=S0, anchor=center}] \arrow[u, "s_n"', bend right, ""{name=T0, anchor=center}]
  \arrow[r, phantom, "K_n" pos=0.65, yshift=12pt] & A_{n+1} \arrow[u, "t_{n+1}", bend left=45, ""{name=S1, anchor=center}] \arrow[u, "s_{n+1}"', bend right=45, ""{name=T1, anchor=center}]
  \arrow[from=S0, to=T0, phantom, "F_n"]
  \arrow[from=S1, to=T1, phantom, "F_{n+1}"]
\end{tikzcd}
\centering\caption{}
\label{subfig:coherence-htpy}
\end{subfigure}
\hspace{-1em}
\begin{subfigure}{.35\textwidth}
\begin{tikzcd}[sep=small, cramped]
  P_n \arrow[dd, two heads]
  \arrow[dr, "g_n"'] \arrow[rr, crossing over, "p_n"]
  & & P_{n+1} \arrow[dd, two heads]
  \arrow[dr, "g_{n+1}"] \\
  & Q_n
  \arrow[rr, "/" {marking, near end}, crossing over, "q_n"' pos=0.3]
  \arrow[ur, phantom, "G_n"]
  & & Q_{n+1} \\
  A_n\arrow[dr, "f_n"'] \arrow[rr, "a_n", pos=0.7]
  & & A_{n+1} \arrow[dr, "f_{n+1}" pos=0.3]
  \\
  & B_n \arrow[from=uu, two heads, crossing over] \arrow[rr, "b_n"']
  \arrow[ur, phantom, "H_n"]
  & & B_{n+1} \arrow[from=uu, two heads, crossing over]
\end{tikzcd}
\centering\caption{}
\label{subfig:coherence-fibhom}
\end{subfigure}
\caption{Coherence diagrams of \subref{subfig:coherence-sect} sections, \subref{subfig:coherence-htpy} homotopies of sections, and \subref{subfig:coherence-fibhom} fiberwise morphisms. Note that dependent functions point up. Figure \subref{subfig:coherence-htpy} should be read as a cylinder, with front square \(K_n\) and back square \(L_n\). In figure \subref{subfig:coherence-fibhom}, the slash on the arrow \(q_n\) indicates that there is an implicit transport along \(H_n\), which places the dependent square over the bottom square.}
\end{figure}

The rest of this sections develops vocabulary and lemmas used to prove  \autoref{lem:preserves-cubes}.

\begin{lem}
\href{https://archive.vojtechstep.eu/zigzag-construction/synthetic-homotopy-theory.dependent-functoriality-sequential-colimits.html\#a-sequence-of-cubes-of-sections-induces-a-square-of-sections-in-the-colimit}{\ExternalLink}
Consider a morphism of sequential diagrams \((f, H) : A_{\bullet} \to B_{\bullet}\), two type families \(P\) and \(Q\) over \(A_{\infty}\) and \(B_{\infty}\) in the same universe, sections \(t_{\bullet}\) and \(s_{\bullet}\) of \(P_{\bullet}\) and \(Q_{\bullet}\), respectively, and a fiberwise equivalence \(e\{a\} : P(a) \simeq Q(f_{\infty}(a))\).
Then given a family of homotopies \(F_n : e_n \comp t_n \htpy s_n \comp f_n\), and a family of cubes as in \autoref{fig:lemma-fig}, we get a homotopy \(e \comp t_{\infty} \htpy s_{\infty} \comp f_{\infty}\).
\label{lem:preserves-cubes}
\end{lem}

\begin{figure}
\begin{subfigure}{.7\textwidth}
\begin{tikzcd}[sep=small]
  (P \comp \inn^n_A)
  \arrow[dr, "(C_n \tr)\, \comp\, e" pos=0.8] \arrow[rr, crossing over, "\kapn^n_A \tr"]
  & & (P \comp \inn^{n+1}_A) \arrow[from=dd, "t_{n+1}"' pos=0.8]
  \arrow[dr, "(C_{n+1} \tr)\, \comp\, e"]
  \\
  & (Q \comp \inn^n_B)
  \arrow[rr, "/" {marking, near end}, crossing over, "\kapn^n_B \tr"' pos=0.2]
  & & (Q \comp \inn^{n+1}_B)
  \\
  A_n\arrow[dr, "f_n"'] \arrow[rr, "a_n", pos=0.8] \arrow[uu, "t_n"] \arrow[ur, phantom, "F_n"]
  & & A_{n+1} \arrow[dr, "f_{n+1}"' {pos=0.3, outer sep=-2pt}] \arrow[ur, phantom, "F_{n+1}"]
  \\
  & B_n \arrow[to=uu, crossing over, "s_n" pos=0.3] \arrow[rr, "b_n"']
  \arrow[ur, phantom, "H_n"]
  & & B_{n+1} \arrow[uu, crossing over, "s_{n+1}"']
\end{tikzcd}
\centering\caption{}
\label{subfig:lemma-input}
\end{subfigure}
\begin{subfigure}{.29\textwidth}
\begin{tikzcd}[sep=small]
P \arrow[dr, "e"] \\[1ex]
&[2.7em] Q \\
A_{\infty} \arrow[uu, "t_{\infty}"] \arrow[dr, "f_{\infty}"'] \\
& B_{\infty} \arrow[uu, "s_{\infty}"']
\end{tikzcd}
\centering\caption{}
\label{subfig:lemma-output}
\end{subfigure}
\caption{\label{fig:lemma-fig}\subref{subfig:lemma-input} Dependent cubes of sections which induce \subref{subfig:lemma-output} a dependent square of sections in the colimit. The top square of \subref{subfig:lemma-input} is described in \autoref{defn:squares-over}.}
\end{figure}

The result by itself is intuitive enough, and one might not even write it down when working on paper: it shows a limited version of a homotopy of sections \((e_{\bullet} \comp t_{\bullet}) \htpy (s_{\bullet} \comp f_{\bullet})\) inducing a homotopy \((e_{\infty} \comp t_{\infty}) \htpy (s_{\infty} \comp f_{\infty})\). We note, however, that it is not a straightforward application of functoriality: the type of the input homotopy mixes morphisms, fiberwise morphisms and sections, so the two composition operators are different, and neither one is the composition of simple morphisms. We choose to tailor the statements of the lemmas to the proof of correctness of the zigzag construction, imposing stronger assumptions than necessary if it simplifies the proofs, see \eg \autoref{remark:equiv-good}.

We begin by introducing constructions mediating between diagrams and colimits.

\begin{constr}
\href{https://archive.vojtechstep.eu/zigzag-construction/synthetic-homotopy-theory.functoriality-sequential-colimits.html\#a-morphism-of-sequential-diagrams-induces-a-map-of-sequential-colimits}{\ExternalLink}
A morphism of sequential diagrams \(f_{\bullet} : A_{\bullet} \to B_{\bullet}\) induces a map of colimits \(f_{\infty} : A_{\infty} \to B_{\infty}\), by applying the universal property to the cocone constructed by precomposing the cocone \(B_{\infty}\) with \(f_{\bullet}\). Since the cocone map is an equivalence, \(f_{\infty}\) is the unique such map equipped with a family of computation rules \(C_n : f_{\infty} \comp \inn_A^n \htpy \inn_B^n \comp f_n\), which are equipped with a coherence as illustrated by the prism in \autoref{subfig:prism-coh}.
\label{lem:functoriality}
\end{constr}

\begin{figure}
\begin{subfigure}{.34\textwidth}
\begin{tikzcd}[row sep=small]
  A_n
  \arrow[dd, "f_n"']
  \arrow[rr, "a_n"]
  \arrow[dr, "\inn_A^n"', near end]
  && A_{n+1}
  \arrow[dd, "f_{n+1}"]
  \arrow[dl, "\inn_A^{n + 1}", near end] \\
  & A_{\infty} \\
  B_n
  \arrow[rr, "b_n", near start]
  \arrow[dr, "\inn_B^n"']
  && B_{n+1}
  \arrow[dl, "\inn_B^{n + 1}"] \\
  & B_{\infty}
  \arrow[from=uu, "f_{\infty}", near start, crossing over]
\end{tikzcd}
\vspace{.35ex}
\centering\caption{}
\label{subfig:prism-coh}
\end{subfigure}
\begin{subfigure}{.65\textwidth}
\begin{tikzcd}[column sep=small, row sep=small]
  P_n \arrow[dddr, two heads, bend right=15] \arrow[rr, "\kapn_A^n\tr"] \arrow[dr, "e"]
  & & P_{n+1} \arrow[dddr, two heads, bend right=15] \arrow[dr, "e"] \\
  & (Q \comp f_{\infty})_n \arrow[dd, two heads]
  \arrow[dr, "C_n\tr"'] \arrow[rr, crossing over, near start, "\kapn_A^n\tr"]
  & & (Q \comp f_{\infty})_{n+1} \arrow[dd, two heads]
  \arrow[dr, "C_{n+1}\tr"] \\
  & & Q_n
  \arrow[rr, "/" {marking, near end}, crossing over, "\kapn_B^n\tr" pos=0.3]
  & & Q_{n+1} \\
  & A_n\arrow[dr, "f_n"'] \arrow[rr, "a_n", pos=0.7]
  & & A_{n+1} \arrow[dr, "f_{n+1}"]
  \\
  & & B_n \arrow[from=uu, two heads, crossing over] \arrow[rr, "b_n"']
  \arrow[ur, phantom, "H_n"]
  & & B_{n+1} \arrow[from=uu, two heads, crossing over]
\end{tikzcd}
\centering\caption{}
\label{fig:fibhom-coh}
\end{subfigure}
\caption{Coherences of \subref{subfig:prism-coh} the computation rules of \(f_{\infty}\), and \subref{fig:fibhom-coh} the fiberwise morphism induced by \(e\{a : A_{\infty}\} : P(a) \simeq Q(f_{\infty}(a))\).}
\end{figure}

\begin{constr}
\href{https://archive.vojtechstep.eu/zigzag-construction/synthetic-homotopy-theory.descent-data-sequential-colimits.html\#descent-data-induced-by-families-over-cocones-under-sequential-diagrams}{\ExternalLink}
A type family \(P\) over \(A_{\infty}\) induces a dependent diagram \(P_{\bullet}\), consisting of type families \(P_n(a) \defeq P(\inn_n(a))\) and maps \(\kapn_n \tr : (a : A_n) \to P(\inn_n(a)) \to P(\inn_{n+1}(a_n(a)))\).
\end{constr}

Observe that the type of dependent cocones with vertex \(P\) is exactly the same as the type of sections of the induced type family \(P_{\bullet}\).

\begin{constr}
Given a dependent map \(s : (a : A_{\infty}) \to P(a)\), the \textbf{induced section}~\href{https://archive.vojtechstep.eu/zigzag-construction/synthetic-homotopy-theory.dependent-functoriality-sequential-colimits.html\#sections-of-descent-data-induced-by-dependent-functions}{\ExternalLink} of the dependent diagram \(P_{\bullet}\) is the section \(s_{\bullet} \defeq \depCoconeMapSC(s)\).
\end{constr}

\begin{constr}
\href{https://archive.vojtechstep.eu/zigzag-construction/synthetic-homotopy-theory.dependent-functoriality-sequential-colimits.html\#the-dependent-map-induced-by-a-section-of-descent-data}{\ExternalLink}
Given a type family \(P\) over \(A_{\infty}\) and a section \(s_{\bullet}\) of \(P_{\bullet}\), the dependent map \(s_{\infty}(a : A_{\infty}) : P(a)\) is defined by \(s_{\bullet}\) viewed as a dependent cocone.
\end{constr}

\begin{constr}
\href{https://archive.vojtechstep.eu/zigzag-construction/synthetic-homotopy-theory.dependent-functoriality-sequential-colimits.html\#fiberwise-morphism-of-dependent-sequential-diagrams-induced-by-a-fiberwise-equivalence}{\ExternalLink}
Given a morphism of sequential diagrams \(f_{\bullet} : A_{\bullet} \to B_{\bullet}\), a fiberwise map \({e\{a : A_{\infty}\}: P(a) \to Q(f_{\infty}(a))}\) induces a fiberwise morphism \(e_{\bullet}\) from \(P_{\bullet}\) to \(Q_{\bullet}\) over \(f_{\bullet}\). The maps \(e_n\{a : A_n\}\) are defined as
\begin{tikzcd}[cramped, sep=small]
P_n(a) \arrow[r, "e" pos=0.4] &
Q(f_{\infty}\inn_A^n(a)) \arrow[r, "C_n \tr" pos=0.4] &[10pt]
Q_n(f_n(a)) ,
\end{tikzcd}
where \(C_n\{a : A_n\} : f_{\infty}\inn_A^n(a) = \inn_B^nf_n(a)\) is the computation rule of \(f_{\infty}\).

To define the coherences, we help ourselves with the diagram in \autoref{fig:fibhom-coh}. The desired dependent squares are defined by pasting --- the top square on the left is non-dependent, and is filled by transports commuting with fiberwise maps \cite[Lemma 2.3.11]{UF13}. To fill the right square, we use distributivity of transport over path concatenation \cite[Lemma 2.3.9]{UF13} and left whiskering \cite[Lemma 2.3.10]{UF13} to adjust the boundary so that we are asked to fill a homotopy of two transports in the same family, over different paths. The two paths are then shown to be equal by the coherence of \(C_n\), making the two transports homotopic.
\label{defn:squares-over}
\end{constr}

\begin{constr}
\href{https://archive.vojtechstep.eu/zigzag-construction/synthetic-homotopy-theory.dependent-functoriality-sequential-colimits.html\#composition-of-a-section-and-morphism}{\ExternalLink}
Given a morphism of sequential diagrams \((f, H) : A_{\bullet} \to B_{\bullet}\), a type family \(Q : B_{\infty} \to \UU\) and a section \((s, K)\) of \(Q_{\bullet}\), we construct the section \(s_{\bullet} \compr f_{\bullet}\)\label{compr} of the dependent sequential diagram over \(A_{\bullet}\) induced by \((Q \comp f_{\infty}) : A_{\infty} \to \UU\). The symbol ``\(\compr\)'' serves as a reminder of the shape of this composition. The maps \((s_{\bullet} \compr f_{\bullet})_n\) are given by the composites \((C_n^{-1} \tr) \comp s_n \comp f_n : (a : A_n) \to Q(f_{\infty}(\inn_A^n(a)))\).

The intuition for constructing the coherences is visualized in \autoref{subfig:sect-hom-coh}. It consists of pasting the squares \(H_n\), \(K_n\), and the dependent square from \autoref{defn:squares-over}. The top square is ``flipped'' from front to back as a non-dependent square, meaning that the transport stays on the same arrow. The flipping is defined by transposing equivalences along homotopies, to go from \((H_n \tr) \comp (\kapn_B^n \tr) \comp (C_n \tr) \htpy (C_{n+1} \tr) \comp (\kapn_A^n \tr)\) to \((C_{n+1}^{-1} \tr) \comp (H_n \tr) \comp (\kapn_B^n \tr) \htpy (\kapn_A^n \tr) \comp (C_n^{-1} \tr)\). In the present graphical framework we cannot formally paste \(H_n\) and \(K_n\), so this diagram is mainly for illustrative purposes.
\label{defn:sect-precomp}
\end{constr}

\begin{figure}
\begin{tikzcd}[column sep=small, row sep=small]
  (Q \comp f_{\infty})_n
  \arrow[from=dr, "C_n^{-1}\tr"] \arrow[rr, crossing over, "\kapn_A^n\tr"]
  & & (Q \comp f_{\infty})_{n+1}
  \arrow[from=dr, "C_{n+1}^{-1}\tr"'] \\
  & Q_n
  \arrow[rr, "/" {marking, near end}, crossing over, "\kapn_B^n\tr" pos=0.3]
  & & Q_{n+1} \\
  A_n\arrow[dr, "f_n"'] \arrow[rr, "a_n", pos=0.7]
  & & A_{n+1} \arrow[dr, "f_{n+1}"]
  \\
  & B_n \arrow[uu, crossing over, "s_n" near start] \arrow[rr, "b_n"']
  & & B_{n+1} \arrow[uu, crossing over, "s_{n+1}"']
\end{tikzcd}
\caption{\label{subfig:sect-hom-coh}Coherences of the section \(s_{\bullet} \compr f_{\bullet}\) of \((Q \comp f_{\infty})_{\bullet}\).}
\end{figure}

We prove a lemma which allows us to construct homotopies to \(s_{\bullet} \compr f_{\bullet}\) in a more straightforward way which does not involve flipping the top square.

\begin{figure}
\begin{tikzcd}[column sep=small, row sep=small]
  (Q \comp f_{\infty})_n \arrow[from=dd, "t_n"]
  \arrow[dr, "C_n\tr"'] \arrow[rr, crossing over, "\kapn_A^n\tr"]
  & & (Q \comp f_{\infty})_{n+1} \arrow[from=dd, "t_{n+1}"'{pos=.7}]
  \arrow[dr, "C_{n+1}\tr"] \\
  & Q_n
  \arrow[rr, "/" {marking, near end}, crossing over, "\kapn_B^n\tr" pos=0.25]
  & & Q_{n+1} \\
  A_n\arrow[dr, "f_n"'] \arrow[rr, "a_n", pos=0.7]
  & & A_{n+1} \arrow[dr, "f_{n+1}"]
  \\
  & B_n \arrow[uu, crossing over, "s_n"{pos=.25}] \arrow[rr, "b_n"']
  & & B_{n+1} \arrow[uu, crossing over, "s_{n+1}"']
\end{tikzcd}
\caption{\label{subfig:cube-coh}Coherences of squares of sections \(F_n\). The faces of the cube are \(H_n\), \(F_n\), \(F_{n+1}\), the coherences of \(t_{\bullet}\) and \(s_{\bullet}\), and the dependent square from \autoref{fig:fibhom-coh}.}
\end{figure}

\begin{lem}
\href{https://archive.vojtechstep.eu/zigzag-construction/synthetic-homotopy-theory.dependent-functoriality-sequential-colimits.html\#cubes-of-sections-of-pulled-back-descent-data-induce-homotopies-of-composed-sections}{\ExternalLink} Given a morphism \((f, H)\) and a section \((s, K)\) as in \autoref{lem:preserves-comp}, and additionally a section \((t, L)\) of \((Q \comp f_{\infty})_{\bullet}\), squares of sections \(F_n\{a\}\) witnessing paths \(C_n \tr (t_n(a)) = s_n(f_n(a))\), and coherences filling the cubes in \autoref{subfig:cube-coh}, then the sections \(t\) and \(s_{\bullet} \compr f_{\bullet}\) are homotopic.
\label{lem:cube-compr}
\end{lem}

\begin{proof}
To show that \(t\) and \(s_{\bullet} \compr f_{\bullet}\) are homotopic, we need to complete the partial cube in \autoref{subfig:sect-hom-coh}, extended with the dependent maps \(t_n\), \(t_{n+1}\) and the back face \(L_n\). The homotopy of maps \(t_n \htpy (C_n^{-1} \tr) \comp s_n \comp f_n\) is obtained from \(F_n\) by transposing \((C_n \tr)\). A technical proof by homotopy algebra shows that just as we can ``flip'' \((C_n \tr)\) and the top square, we can coherently flip the cube filler to get a cube with the expected top.
\end{proof}

\begin{lem}
\href{https://archive.vojtechstep.eu/zigzag-construction/synthetic-homotopy-theory.dependent-functoriality-sequential-colimits.html\#sequential-colimits-preserve-compositions-of-morphisms-and-sections}{\ExternalLink} The dependent map \((s_{\bullet} \compr f_{\bullet})_{\infty} : (a : A_{\infty}) \to Q(f_{\infty}(a))\) induced by \autoref{defn:sect-precomp} is homotopic to the composition \(s_{\infty} \comp f_{\infty}\).
\label{lem:preserves-comp}
\end{lem}

\begin{proof}
Since dependent functions out of \(A_{\infty}\) are fully determined by their induced dependent cocones, it suffices to show that the two induced sections of the dependent diagram \((Q \comp f_{\infty})_{\bullet}\), namely \((s_{\infty} \comp f_{\infty})_{\bullet}\) and \(((s_{\bullet} \compr f_{\bullet})_{\infty})_{\bullet}\), are homotopic. The latter is homotopic to its defining dependent cocone \((s_{\bullet} \compr f_{\bullet})\), so it suffices to show a homotopy between that and \((s_{\infty} \comp f_{\infty})_{\bullet}\).

By \autoref{lem:cube-compr}, it suffices to give homotopies between \((C_n \tr) \comp s_{\infty} \comp f_{\infty} \comp \inn_A^n\) and \(s_n \comp f_n\), and appropriate cube fillers.

First recall that \(s_{\infty}\) is defined by the universal property, so it comes equipped with computation rules and coherences. They are depicted in \autoref{subfig:prism-coh-dep}: the side squares are the computation rules, and the filler of the prism of dependent maps is the coherence.

Then to give the required homotopy of maps, construct the square of sections as shown in \autoref{subfig:htpy-sf} by taking \(a \mapsto \apd_{s_{\infty}}(C_n(a))\) for the left square, and the computation rule of \(s_{\infty}\) on \(\inn_B^n(f_n(a))\) for the right homotopy.

The coherences are obtained by constructing coherences for the left and right subshapes from \autoref{subfig:htpy-sf} separately and then pasting them. Coherence of the left square follows from coherence of \(C_n\), as the boundary involves the homotopies \(C_n\), \(C_{n+1}\), \(\kapn_A^n\), \(\kapn_B^n\) and \(H_n\). For some insight, recall that the top square was defined by deploying technical lemmas to manipulate the boundary to the form \(p \tr q = r \tr q\) for certain paths \(p\) and \(r\) being the two sides of the coherence of \(C_n\), and then using the coherence. Here we then go through these technical manipulations and show how they behave when composed with variations of \(\apd_{s_{\infty}}\). Since only the top face mentions the coherence of \(C_n\), it may not be apparent that it commutes with the other faces. The key is that when we want to identify \(\apd_{s_{\infty}}(p)\) and \(\apd_{s_{\infty}}(q)\) we get stuck, since the two have different types --- one is a path from \(p\tr s(x)\), the other from \(q\tr s(x)\) for some \(x\). One of those then needs to be adjusted by exactly the path that we are trying to manifest.

Coherence of the right homotopy follows from coherence of \(s_{\infty}\), as it involves the homotopies \(K_n\), \(\kapn_B^n\), and the computation rules of \(s_{\infty}\).

The necessary homotopy algebra is not interesting, so it is not reproduced here, but the full proof is available in the formalization.
\end{proof}

\begin{figure}
\begin{subfigure}{.34\textwidth}
\begin{tikzcd}[row sep=1ex]
  Q_n
  \arrow[rr, "\kapn_B^n \tr"]
  \arrow[dr, "\id"', near end]
  && Q_{n+1}
  \arrow[dl, "\id", near end] \\
  & Q \\
  B_n
  \arrow[uu, "s_n"]
  \arrow[rr, "b_n", near start]
  \arrow[dr, "\inn_B^n"']
  && B_{n+1}
  \arrow[uu, "s_{n+1}"']
  \arrow[dl, "\inn_B^{n + 1}"] \\
  & B_{\infty}
  \arrow[uu, "s_{\infty}"', near start, crossing over]
\end{tikzcd}
\vspace{.35ex}
\centering\caption{}
\label{subfig:prism-coh-dep}
\end{subfigure}
\begin{subfigure}{.35\textwidth}
\begin{tikzcd}[column sep=45pt]
  (Q \comp f_{\infty})_n \arrow[r, "C_n \tr"] & Q_n \\
  A_n
  \arrow[u, "s_{\infty} \comp f_{\infty} \comp \inn_A^n"]
  \arrow[r, "f_n"']
  & B_n \arrow[u, bend right, "s_n"']
  \arrow[u, bend left, "s_{\infty} \comp \inn_B^n"]
\end{tikzcd}
\centering\caption{}
\label{subfig:htpy-sf}
\end{subfigure}
\centering
\caption{\subref{subfig:prism-coh-dep} Coherence of the computation rules of \(s_{\infty}\), which are used for the coherence of the right homotopy in \subref{subfig:htpy-sf}.}
\end{figure}

\begin{cor}
\href{https://archive.vojtechstep.eu/zigzag-construction/synthetic-homotopy-theory.dependent-functoriality-sequential-colimits.html\#cubes-of-sections-of-pulled-back-descent-data-induce-homotopies-of-composed-maps-in-the-colimit}{\ExternalLink} Given the same data as in \autoref{lem:cube-compr}, we get a homotopy of dependent maps \(t_{\infty} \htpy s_{\infty} \comp f_{\infty}\).
\label{lem:improved-preserves-comp}
\end{cor}

\begin{proof}
By \autoref{lem:preserves-comp} \(s_{\infty} \comp f_{\infty}\) is homotopic to \((s_{\bullet} \comp f_{\bullet})_{\infty}\), so it suffices to construct a homotopy between the sections \(t\) and \((s_{\bullet} \compr f_{\bullet})\) to get a homotopy of the dependent maps they induce. This homotopy is provided exactly by \autoref{lem:cube-compr}.
\end{proof}

\begin{constr}
\href{https://archive.vojtechstep.eu/zigzag-construction/synthetic-homotopy-theory.dependent-functoriality-sequential-colimits.html\#composition-of-a-section-and-a-fiberwise-morphism}{\ExternalLink}
Given a morphism of sequential diagrams \(f_{\bullet} : A_{\bullet} \to B_{\bullet}\), a fiberwise map \(e\{a : A_{\infty}\}: P(a) \to Q(f_{\infty}(a))\), and a section \((t, K)\) of \(P_{\bullet}\), we construct the section \(e_{\bullet} \compl t_{\bullet}\) of \((Q \comp f_{\infty})_{\bullet}\)\label{compl}.
Once again, the symbol ``\(\,\compl\)'' is supposed to evoke the shape of the composition. The maps \((e_{\bullet} \compl t_{\bullet})_n\) are defined to be \((e \comp t_n) : (a : A_n) \to Q(f_{\infty}(\inn_A^{n}(a)))\). The coherences are obtained by vertically composing the diagrams in \autoref{fig:comp-coh}.
\label{defn:comp'}
\end{constr}

\begin{figure}
\begin{subfigure}{.5\textwidth}
\begin{tikzcd}
  P_n(a) \arrow[d, "e"'] \arrow[r, "\kapn_A^n(a) \tr"] & P_{n+1}(a_n(a)) \arrow[d, "e"] \\
  (Q \comp f_{\infty})_n(a) \arrow[r, "\kapn_A^n(a) \tr"'] & (Q \comp f_{\infty})_{n+1}(a_n(a))
\end{tikzcd}
\centering\caption{}
\label{subfig:coh-over}
\end{subfigure}
\begin{subfigure}{.4\textwidth}
\begin{tikzcd}
  P_n \arrow[r, "\kapn_A^n \tr"] & P_{n+1} \\
  A_n \arrow[u, "t_n"] \arrow[r, "a_n"'] & A_{n+1} \arrow[u, "t_{n+1}"']
\end{tikzcd}
\centering\caption{}
\label{subfig:coh-sect}
\end{subfigure}
\centering
\caption{\label{fig:comp-coh}Coherences of \(e_{\bullet} \compl t_{\bullet}\) are obtained by pasting \subref{subfig:coh-over} the square witnessing commutativity of transports and fiberwise maps, on top of \subref{subfig:coh-sect} the coherence square \(K_n\) of \(t_{\bullet}\).}
\end{figure}

\begin{lem}
\href{https://archive.vojtechstep.eu/zigzag-construction/synthetic-homotopy-theory.dependent-functoriality-sequential-colimits.html\#sequential-colimits-preserve-compositions-of-sections-and-fiberwise-morphisms}{\ExternalLink}
In the context of \autoref{defn:comp'}, whenever \(P\) and \(Q\) are in the same universe and \(e\) is a family of equivalences, the dependent map \((e_{\bullet} \compl t_{\bullet})_{\infty} : (a : A_{\infty}) \to Q(f_{\infty}(a))\) is homotopic to \(e \comp t_{\infty}\).
\label{lem:preserves-fib-comp}
\end{lem}

\begin{proof}
Since fiberwise equivalences of type families in the same universe characterize their identity types, we may assume \(P \judeq (Q \comp f_{\infty})\) and \(e\{a\} \judeq \id\). Then we are asked to show a homotopy \((\id_{\bullet} \compl t_{\bullet})_{\infty} \htpy t_{\infty}\), which is a homotopy between induced dependent functions, so it suffices to show a homotopy of their underlying sections of dependent diagrams. This homotopy is constructed by taking \(\reflhtpy : t_n \htpy t_n\) on the maps, and calculating that commutativity of transport and fiberwise identity is homotopic to \(\reflhtpy : (\kapn_A^n(a) \tr) \htpy (\kapn_A^n(a) \tr)\), so the coherence part requires a trivial coherence between \(K_n\) and \(K_n\).
\end{proof}

\begin{rem}
We limit the type families to be in the same universe and require \(e\) to be an equivalence in order to apply induction on fiberwise equivalences. This result is expected to hold without these restrictions, however they simplify the formalization of this proof --- both sides take on the form \(\blank_{\infty}\), so the problem reduces to a unit law of sections of dependent diagrams, whose map part computes judgmentally, and no higher path algebra is needed. In fact, the proof of the unit law is shorter than the definition of the composition \(e_{\bullet} \compl t_{\bullet}\)!
\label{remark:equiv-good}
\end{rem}

\begin{proof}[Proof of \autoref{lem:preserves-cubes}]
By \autoref{lem:preserves-fib-comp} we have a homotopy \(e \comp t_{\infty} \htpy (e_{\bullet} \compl t_{\bullet})_{\infty}\). To get the remaining homotopy \((e_{\bullet} \compl t_{\bullet})_{\infty} ~ s_{\infty} \comp f_{\infty}\) we use \autoref{lem:improved-preserves-comp} --- by definition of \(e_n\), the homotopies \(F_n\) have the correct type, taking \(e_{\bullet} \compl t_{\bullet}\) for the left section. To finish the proof, some mechanical path algebra is necessary to transform the cubes we are given to the ones expected by \autoref{lem:improved-preserves-comp}, since in the cubes we have the commuting squares involving \(e\) are part of the back square, not the top square.
\end{proof}
\section{Path Spaces of Pushouts}
\label{sec:idpo}
Wärn describes an explicit construction of identity types of pushouts \cite{War23}. He does so by fixing an element \(a_0 : A\), and then defining type families (with infix notation) \(a_0 \rightsquigarrow_{\infty} a\) and \(a_0 \rightsquigarrow_{\infty} b\), such that for any \(a : A\) and \(b : B\), there are equivalences \((\inl(a_0) = \inl(a)) \simeq (a_0 \rightsquigarrow_{\infty} a)\) and \((\inl(a_0) = \inr(b)) \simeq (a_0 \rightsquigarrow_{\infty} b)\). The type families are defined by gradual approximations of the identity types, \(a_0 \rightsquigarrow_{2n} a\) and \(a_0 \rightsquigarrow_{2n + 1} b\). If one thinks of the standard pushout \(\PO{S}{A}{B}\) as a coproduct \(A + B\) with added paths from \(f(s)\) to \(g(s)\), then \(a_0 \rightsquigarrow_{2n} a\) describes the type of identifications between \(\inl(a_0)\) and \(\inl(a)\), provided that we can pass from the \(A\) component to the \(B\) component up to \(n\) times, and similarly for \(a_0 \rightsquigarrow_{2n + 1} b\). The full identity types are then constructed by removing the upper bound on the number of steps, by taking the sequential colimit.

The two type families are related --- if one can get from \(\inl(a_0)\) to \(\inl(f s)\) in \(n\) crossings, then one can get from \(\inl(a_0)\) to \(\inr(g s)\) in \(n + 1\) crossings, and similarly in reverse. We can formally encode this relationship in a structure called a ``zigzag'' between sequential diagrams. We begin by defining general zigzags of sequential diagrams and their behavior in the colimit. Then we define the type families of approximations of identity types, and a zigzag between them. We finish by showing that the induced type families and equivalence form an identity system of descent data, which gives us the desired equivalences by applying \autoref{thm:unique-uniqueness-id-system}.

\begin{defi}
Given sequential diagrams \((A, a)\) and \((B, b)\), a \textbf{zigzag}~\href{https://archive.vojtechstep.eu/zigzag-construction/synthetic-homotopy-theory.zigzags-sequential-diagrams.html\#a-zigzag-between-sequential-diagrams}{\ExternalLink} between them is a quadruple \((f, g, U, L)\), where \(f_n : A_n \to B_n\) and \(g_n : B_n \to A_{n+1}\) are families of maps, and \(U_n : a_n \htpy (g_n \comp f_n)\) and \(L_n : b_n \htpy (f_{n+1} \comp g_n)\) are families of commuting triangles.
\end{defi}

A zigzag \(z \defeq (f, g, U, L)\) can be seen as the following sequence of adjacent triangles:
\begin{center}
\begin{tikzcd}[column sep=small]
  A_0 \arrow[rr, "a_0"]
  \arrow[dr, "f_0"', ""{name=F0, anchor=center}]
  & \arrow[d, phantom, yshift=2pt, "U_0"]
  & A_1 \arrow[rr, "a_1"]
  \arrow[dr, "f_1"]
  &&[-10pt] \cdots \\
  & B_0 \arrow[rr, "b_0"']
  \arrow[ur, "g_0"']
  & \arrow[u, phantom, yshift=-2pt, xshift=2pt, "L_0"]
  & B_1 \arrow[rr, "b_1"']
  && \cdots.
\end{tikzcd}
\end{center}

By forgetting the first triangle and turning the figure upside down, we get a new zigzag \((g, (n \mapsto f_{n+1}), L, (n \mapsto U_{n+1}))\), this time between \(B_{\bullet}\) and \(A_{\bullet+1}\). This new zigzag is called the \textbf{half-shift}~\href{https://archive.vojtechstep.eu/zigzag-construction/synthetic-homotopy-theory.zigzags-sequential-diagrams.html\#half-shifts-of-zigzags-of-sequential-diagrams}{\ExternalLink} of \(z\).

\begin{constr}
\href{https://archive.vojtechstep.eu/zigzag-construction/synthetic-homotopy-theory.zigzags-sequential-diagrams.html\#morphisms-of-sequential-diagrams-induced-by-zigzags-of-sequential-diagrams}{\ExternalLink}
A zigzag induces a morphism of diagrams \((f, H) : A_{\bullet} \to B_{\bullet}\), where the squares are constructed by pasting triangles,
\(H_n \defeq (L_n \rwhisk f_n) \hconcat (f_{n+1} \lwhisk U_n^{-1})\). Then the induced function between colimits is \(f_{\infty} : A_{\infty} \to B_{\infty}\).

The half-shift induces the inverse morphism of diagrams \(g_{\bullet} : B_{\bullet} \to A_{\bullet+1}\). Note that the cocone \(A_{\infty}\) is a cocone under \(A_{\bullet}\), and by forgetting the first triangle we get a cocone under \(A_{\bullet+1}\). By \cite[Lemma 3.6]{SvDR20} this cocone equips the type \(A_{\infty}\) with the structure of a colimit of \(A_{\bullet+1}\), so the inverse map induced by \(g_{\bullet}\) is \(g_{\infty} : B_{\infty} \to A_{\infty}\).
\label{constr:zigzag-hom}
\end{constr}

It deserves the name ``inverse'', because we now show that \(g_{\infty}\) is an inverse of \(f_{\infty}\).

\begin{thm}
\href{https://archive.vojtechstep.eu/zigzag-construction/synthetic-homotopy-theory.zigzags-sequential-diagrams.html\#zigzags-of-sequential-diagrams-induce-equivalences-of-sequential-colimits}{\ExternalLink}
Consider a zigzag \((f, g, U, L)\) between \((A, a)\) and \((B, b)\). Then \(g_{\infty}\) and \(f_{\infty}\) are mutually inverse equivalences.
\label{thm:equiv-zigzag}
\end{thm}

\begin{figure}
\begin{tikzcd}
  A_n \arrow[r, "a_n"]
  \arrow[d, "f_n"', ""{name=FN}]
  \arrow[dd, rounded corners, to path={-- ([xshift=-20pt]\tikztostart.west) |- (\tikztotarget)\tikztonodes}, pos=0.25, "a_n"', ""{name=AN}]
  & A_{n + 1}
  \arrow[d, "f_{n + 1}", ""{name=FN1}]
  \arrow[from=FN, phantom, "U_n^{-1}"]
  \arrow[dd, rounded corners, to path={-- ([xshift=25pt]\tikztostart.east) |- (\tikztotarget)\tikztonodes}, pos=0.25, "a_{n+1}", ""{name=AN1}]
 \\
  B_n \arrow[r]
  \arrow[ur]
  \arrow[d, "g_n"', ""{name=GN}]
  \arrow[from=AN, phantom, "U_n"]
  \arrow[to=FN1, phantom, "L_n"]
  & B_{n + 1}
  \arrow[d, "g_{n + 1}", ""{name=GN1}]
  \arrow[from=GN, phantom, "L_n^{-1}"]
  \arrow[to=AN1, phantom, pos=0.4, "U_{n+1}^{-1}"]
\\
  A_{n + 1} \arrow[r, "a_{n + 1}"']
  \arrow[ur]
  \arrow[to=GN1, shift right=2pt, phantom, "U_{n + 1}"]
  & A_{n + 2}
\end{tikzcd}
\caption{\label{fig:zigzag-inv-comp}Coherences of the composition of a zigzag and its half-shift.}
\end{figure}

\begin{proof}
The proof proceeds mostly by path algebra, but it also takes advantage of half-shifts: we show that \(g_{\infty}\) is a retraction of \(f_{\infty}\), and then apply the same proof to the half-shift, which shows that a different map is a retraction of \(g_{\infty}\), so \(g_{\infty}\) has both a section and a retraction.

By functoriality \cite[Lemma 3.5]{SvDR20}, we have a homotopy \(g_{\infty} \comp f_{\infty} \htpy (g_{\bullet} \comp f_{\bullet})_{\infty}\). The way \(A_{\infty}\) is constructed as a colimit of \(A_{\bullet+1}\) means the map of colimits induced by the shifting morphism \((a, \reflhtpy) : A_{\bullet} \to A_{\bullet+1}\) is the identity on \(A_{\infty}\): composing it with the cocone under \(A_{\bullet+1}\) recovers the original cocone \(A_{\infty}\). We continue by showing that the morphisms \((a, \refl)\) and \(g_{\bullet} \comp f_{\bullet}\) are homotopic. The homotopy of maps is given by \(U_n\), and the coherence amounts to showing that the diagram in \autoref{fig:zigzag-inv-comp} is homotopic to the reflexive homotopy. This follows by path algebra by canceling out the pairs \(U_n, U_n^{-1}\), \(L_n, L_n^{-1}\), and \(U_{n+1}, U_{n+1}^{-1}\). This concludes the homotopy \(g_{\infty} \comp f_{\infty} \htpy \id\). To construct the retraction of \(g_{\infty}\), apply the above argument to the half shift of the zigzag --- the map \(g_{\infty}\) then appears in the other position as \(f'_{\infty} \comp g_{\infty} \htpy \id\), where \(f'_{\infty}\) is the map induced by the full shift (double half shift).
\end{proof}
\subsection{Zigzag construction of path spaces of pushouts}
\label{sec:org458a43b}

The construction of identity types below replicates the zigzag construction of Wärn \cite{War23}. The reader is invited to refer to Wärn's paper for an exposition of the definition, while we focus on encoding the construction and verifying its correctness in Agda.

For the remainder of the paper, consider a span diagram
\begin{tikzcd}[cramped]
A & S \arrow[l, "f"'] \arrow[r, "g"] & B
\end{tikzcd}
whose pushout's path spaces we want to characterize, and a basepoint \(a_0 : A\). We represent the zigzag construction as descent data. To construct it, we need two type families \(P_A : A \to \UU\) and \(P_B : B \to \UU\), which we define as sequential colimits of certain diagrams \(P_A^{\bullet}\) and \(P_B^{\bullet}\), and a family of equivalences \(P_S\{s\} : P_A(f s) \simeq P_B(g s)\), obtained by constructing a zigzag.

The data we need to construct is thus a sequence of type families \(P_A^n\) over \(A\) with a family of connecting maps \(\incl_A^n\{a : A\} : P_A^n(a) \to P_A^{n+1}(a)\), similarly for \(P_B^n\) and \(\incl_B^n\), and for every \(s : S\) a pair of families of maps \(\blank \concatpre_n s : P_A^n(f s) \to P_B^{n + 1}(g s)\) and \(\blank \concatpre_n \overline{s} : P_B^n(g s) \to P_A^n(f s)\), with some homotopies between them, all of which are indexed by \(n : \N\). Note in particular that the bar over \(s\) is part of the notation for the last map, not an operation on \(S\) by itself.

The intended meaning of \(P_A^n(a)\) and \(P_B^n(b)\) is still the types of paths from \(\inl(a_0)\) to \(\inl(a)\) or \(\inr(b)\), respectively, with at most \(n\) jumps to the \(B\) component. The maps \(\incl_A^n\) and \(\incl_B^n\) then represent the inclusion maps of ``a path with at most \(n\) jumps'' to ``a path with at most \(n+1\) jumps'', and the maps \(\blank \concatpre_n s\) and \(\blank \concatpre_n \overline{s}\) stand for adjoining a jump represented by the path \(\glue(s)\) between \(\inl(fs)\) and \(\inr(gs)\), respectively to and from the \(B\) component.

The construction proceeds by induction on \(n\), with various interdependencies between the definitions. We first present a naïve description of the construction, but then argue for a different induction motive with better computational properties.

Guided by the intuition, we want to take \(P_A^0(a)\) to be the identity type \((a_0 = a)\), \(P_B^0(b)\) to be the empty type \(\0\), and \(\blank \concatpre_0 \overline{s}\) to be the unique map out of the empty type. But then to define \(\blank \concatpre_0 s\), we need to know what \(P_B^1(gs)\) is. The intention is to define \(P_A^{n+1}\) and \(P_B^{n+1}\) as pushouts, specified by span diagrams which use \(\blank \concatpre_n s\) and \(\blank \concatpre_n \overline{s}\), respectively, and recursively define \(\blank \concatpre_{n+1} \overline{s}\) and \(\blank \concatpre_n s\) as the right point constructors of those pushouts. This nontrivial dependence schema is depicted in \autoref{fig:zigzag-deps}. We also want to consider computational behavior: if the proof assistant allowed us to transcribe this description and define everything together by simple induction on \(\N\), we would end up with \(\blank \concatpre_0 s\) and \(\blank \concatpre_{n+1} s\) with the same body, but in different cases of the induction, so it would not hold that \(\blank \concatpre_n s\) is judgmentally the right point constructor of \(P_B^{n+1}(g s)\) for all \(n\).

\begin{figure}
\begin{align*}
  \cdot &\vdash P_B^0 & P_B^n, P_A^n, \blank \concatpre_n \overline{s} &\vdash P_B^{n + 1} \\
  \cdot &\vdash P_A^0 & P_A^n, P_B^{n + 1}, \blank \concatpre_n s &\vdash P_A^{n + 1} \\
  \cdot &\vdash \blank \concatpre_0 \overline{s} & P_A^{n + 1} &\vdash \blank \concatpre_{n + 1} \overline{s} \\
  P_B^1 &\vdash \blank \concatpre_0 s & P_B^{n + 1} &\vdash \blank \concatpre_n s
\end{align*}
\caption{\label{fig:zigzag-deps}Dependencies between definitions in the zigzag construction. Note that with this naïve recursive motive we artificially split the definition of \(\blank \concatpre_n s\) into two cases. In the formalization we remove \(\blank \concatpre_n s\) from the motive, inlining its definition in the construction of \(P_A^{n+1}\), and then define \(\blank \concats{n} s\) after induction is done. This approach gives \(\blank \concats{n} s\) a uniform definition for all \(n\).}
\end{figure}

The motive which we proceed to formalize removes the \(\blank \concatpre_n s\) component altogether. In the construction itself it is only used to define \(P_A^{n + 1}\), where it can be replaced by a direct reference to the right point constructor of the pushout \(P_B^{n + 1}(g s)\), which is already defined by the time we need to define \(P_A^{n + 1}\). Then \(\blank \concats{n} s\) can be defined after the construction as the right point constructor at every stage, without induction, removing code duplication and giving it the right computational behavior. We also want to refer to the span diagrams defining \(P_B^{n + 1}\) and \(P_A^{n + 1}\) later in the code, hence we also remember those in the construction.

\begin{defi}
Given a natural number \(n\), define the type of \textbf{zigzag construction data}~\href{https://archive.vojtechstep.eu/zigzag-construction/synthetic-homotopy-theory.zigzag-construction-identity-type-pushouts.html\#the-motive-of-zigzag-constructions-data}{\ExternalLink} at stage \(n\) to be the type of quadruples \((P_B^n, P_A^n, \blank \concatpre_n \overline{s}, D)\), where \(P_B^n\) is a type family over \(B\), \(P_A^n\) is a type family over \(A\), \(\blank \concatpre_n \overline{s} : P_B^n(g s) \to P_A^n(f s)\) is a family of maps indexed by \(s : S\), and \(D\) is an element of the unit type if \(n = 0\), or of the type of pairs \((\T_B^n, \T_A^n)\), where \(\T_B^n\) is a family of span diagrams indexed by \(B\) and \(\T_A^n\) is a family of span diagrams indexed by \(A\), if \(n\) is nonzero.
\end{defi}

This type can be inhabited for all \(n : \N\).

\begin{constr}
\href{https://archive.vojtechstep.eu/zigzag-construction/synthetic-homotopy-theory.zigzag-construction-identity-type-pushouts.html\#zigzag-construction-data}{\ExternalLink}
Construct an inhabitant of the type of zigzag construction data for every stage \(n\) by induction.

For the zero case, use \(P_B^0(b) \defeq \0\), \(P_A^0(a) \defeq (a_0 = a)\), define \(\blank \concatpre_0 \overline{s}\) by \(\exf\), and inhabit \(D^0\) by the unique element of the unit type.

For the successor case \(n + 1\), first construct the families of span diagrams \(\T_B^{n + 1}\). For an element \(b : B\), define \(\T_B^{n + 1}(b)\) to be the span diagram
\begin{center}
\begin{tikzcd}
  P_B^n(b)
  & \Sigma (s : S).\, (r : b = g s) \times P_B^n(b)
  \arrow[l, "\pr_3"']
  \arrow[r, "\chi"]
  & \Sigma (s : S).\, (r : b = g s) \times P_A^n(f s),
\end{tikzcd}
\end{center}
where \(\chi\) sends \((s, r, p)\) to \((s, r, (r\tr p) \concatpre_n \overline{s})\). Take \(P_B^{n + 1}(b)\) to be the standard pushout of this diagram. Analogously, for an element \(a : A\), define \(\T_A^{n + 1}(a)\) to be the span diagram
\begin{center}
\begin{tikzcd}
  P_A^n(a)
  & \Sigma (s : S).\, (r : a = f s) \times P_A^n(a)
  \arrow[l, "\pr_3"']
  \arrow[r, "\theta"]
  & \Sigma (s : S).\, (r : a = f s) \times P_B^{n + 1}(g s)
\end{tikzcd}
\end{center}
where the map \(\theta\) takes \((s, r, p)\) to \((s, r, \inr(s, \refl, r \tr p))\), using the right point constructor \(\inr\) into the pushout \(P_B^{n + 1}(g s)\). Then define \(P_A^{n + 1}(a)\) to be the standard pushout of \(\T_A^{n + 1}(a)\). Finally, define \(p \concatpre_{n + 1} \overline{s}\) to be \(\inr(s, \refl, p)\) using the right point constructor into \(P_A^{n + 1}(f s)\).
\label{constr:zigzag-spans}
\end{constr}

We keep using the names \(\PB{n}\)\mlabel{PBn}, \(\PA{n}\)\mlabel{PAn}, and \(\blank \concatinvs{n} \overline{s}\)\mlabel{concatinvs} for the corresponding elements of this canonical construction.

\begin{constr}
\href{https://archive.vojtechstep.eu/zigzag-construction/synthetic-homotopy-theory.zigzag-construction-identity-type-pushouts.html\#inverse-bridge-concatenation}{\ExternalLink}
For every stage \(n : \N\) and element \(s : S\), define the map \(\blank \concats{n} s\)\mlabel{concats} from \(\PA{n}(f s)\) to \(\PB{n + 1}(g s)\) to send \(p\) to \(\inr(s, \refl, p)\), where \(\inr\) is the right point constructor of \(\PB{n + 1}(g s)\).
\end{constr}

We may now construct the sequential diagrams of approximations of the type families \((\inl(a_0) = \inr(b))\) and \((\inl(a_0) = \inl(a))\).

\begin{constr}
\href{https://archive.vojtechstep.eu/zigzag-construction/synthetic-homotopy-theory.zigzag-construction-identity-type-pushouts.html\#right-family}{\ExternalLink}
Given an element \(b : B\), define the sequential diagram \(\PBdiag{\bullet}(b)\)\label{PBdiag} to be the diagram
\begin{tikzcd}[cramped]
  \PB{0}(b)
  \arrow[r, "\inclB{0}"]
  & \PB{1}(b)
  \arrow[r, "\inclB{1}"]
  & \PB{2}(b)
  \arrow[r, "\inclB{2}"]
  & \cdots,
\end{tikzcd}
where the maps \(\inclB{n}\)\label{inclB} are the left point constructors \(\inl\) of \(\PB{n + 1}(b)\). Denote its sequential colimit \(\PBinf(b)\)\label{PBinf}, with point constructors \(\inB{n}\)\label{inB} and path constructors \(\kapB{n}\)\label{kapB}.
\label{defn:path-to-b}
\end{constr}

\begin{constr}
\href{https://archive.vojtechstep.eu/zigzag-construction/synthetic-homotopy-theory.zigzag-construction-identity-type-pushouts.html\#left-family}{\ExternalLink}
Given an element \(a : A\), define the sequential diagram \(\PAdiag{\bullet}(a)\)\label{PAdiag} to be the diagram
\begin{tikzcd}[cramped]
  \PA{0}(a)
  \arrow[r, "\inclA{0}"]
  & \PA{1}(a)
  \arrow[r, "\inclA{1}"]
  & \PA{2}(a)
  \arrow[r, "\inclA{2}"]
  & \cdots,
\end{tikzcd}
where the maps \(\inclA{n}\)\label{inclA} are the left point constructors \(\inl\) of the pushouts defining \(\PA{n + 1}(a)\). Denote its sequential colimit \(\PAinf(a)\)\label{PAinf}, with point constructors \(\inA{n}\)\label{inA} and path constructors \(\kapA{n}\)\label{kapA}.
\label{defn:path-to-a}
\end{constr}

\begin{rem}
We want to be very careful about the numeric indices of \(P_B\). The type \(\PB{0}(b)\) is essentially irrelevant, since data over it can always be defined by \(\exf\). We could have just as well shifted the sequence and had \(\PB{0}(b)\) be the pushout we currently call \(\PB{1}(b)\). However, this numbering would not align with the intended intuition. Additionally, since \(\PA{n}\) and \(\PB{n}\) are defined together, it is good for uniformity to have constructions at the zeroth index be atomic, and constructions at successor indices be pushouts.

In general there is no map from \(\PA{0}\) to \(\PB{0}\), so when constructing the zigzag below it needs to be between the sequences \(\PAdiag{\bullet}(f s)\) and \(\PBdiag{\bullet + 1}(g s)\). The convention we will follow is using \(\PBdiag{\bullet}\) when data needs to be defined mutually over \(\PA{n}\) and \(\PB{n}\), for uniformity, but then mostly regarding \(\PBinf(b)\) as the colimit of \(\PBdiag{\bullet + 1}\) via \cite[Lemma 3.6]{SvDR20}, since that is how we are able to construct a zigzag.

One option to sidestep this non-uniform approach is to take the sequential diagram defining \(\PBinf(b)\) to start at \(\PB{1}(b)\). The author judged this presentation to be more confusing, since then the domain of \eg \(\inB{n}\) would be \(\PB{n+1}\), and similarly a type family \(Q\) over \(\PBinf(b)\) would induce a dependent sequential diagram in such a way that \(Q_n\) would lie over \(\PB{n+1}(b)\).
\end{rem}

\begin{constr}
\href{https://archive.vojtechstep.eu/zigzag-construction/synthetic-homotopy-theory.zigzag-construction-identity-type-pushouts.html\#zigzag-between-left-and-right-families}{\ExternalLink}
Given an element \(s : S\), construct the zigzag between \(\PAdiag{\bullet}(f s)\) and \(\PBdiag{\bullet+1}(g s)\) as
\begin{center}
\begin{tikzcd}[column sep=tiny]
  (a_0 = f s) \arrow[rr, "\inclA{0}"]
  \arrow[dr, "\blank \concats{0} s"']
  && \PA{1}(f s) \arrow[rr, "\inclA{1}"]
  \arrow[dr, "\blank \concats{1} s"]
  && \cdots \\
  & \PB{1}(g s) \arrow[rr, "\inclB{1}"']
  \arrow[ur, "\blank \concatinvs{1} \overline{s}"']
  && \PB{2}(g s) \arrow[rr, "\inclB{2}"']
  && \cdots,
\end{tikzcd}
\end{center}
where the triangles are the partially applied path constructors
\begin{alignat*}{2}
  \hspace{-1em}\glue(s, \refl, \blank) &: \inclA{n} \htpy (\blank \concats{n} s) \concatinvs{n + 1} \overline{s} &\hspace{2em}
  \glue(s, \refl, \blank) &: \inclB{n+1} \htpy (\blank \concatinvs{n+1} \overline{s}) \concats{n+1} s
\end{alignat*}
of \(\PA{n + 1}(f s)\) and \(\PB{n + 2}(g s)\), respectively.

In the context of this zigzag, we refer to the triangles as only \(\glueA{n}\)\label{glueA} and \(\glueB{n+1}\)\label{glueB}, not mentioning the \(s\) and \(\refl\) arguments.
\label{defn:zigzag-zigzag}
\end{constr}

\begin{constr}
Define the \textbf{zigzag construction}~\href{https://archive.vojtechstep.eu/zigzag-construction/synthetic-homotopy-theory.zigzag-construction-identity-type-pushouts.html\#zigzag-construction-descent-data}{\ExternalLink} descent data \((\PAinf, \PBinf, \blank \concatinf s)\), where the type families are \autoref{defn:path-to-a} and \autoref{defn:path-to-b}, respectively, and the family of equivalences \(\blank \concatinf s : \PAinf(f s) \simeq \PBinf(g s)\)\mlabel{concatinf}, parameterized by \(s : S\), is induced by \autoref{defn:zigzag-zigzag} using \autoref{thm:equiv-zigzag}.

Additionally, this descent data is pointed with the element \(\inA{0}(\refl_{a_0}) : \PAinf(a_0)\), which we call \(\reflinf\)\mlabel{refl-infty}.
\end{constr}
\subsection{Correctness of the zigzag construction}
\label{sec:org7cfd7c9}

The rest of the paper proves that the zigzag descent data is an identity system pointed at \(\reflinf\), verifying that the construction characterizes the path spaces of pushouts. To that end, we consider arbitrary pointed descent data over its total span, and construct a section. For readability, we curry all the components and make some arguments implicit.

Namely, in the remainder of this section consider type families \(Q_{\Sigma A}\{a : A\} : \PAinf(a) \to \UU\), \(Q_{\Sigma B}\{b : B\} : \PBinf(b) \to \UU\), equivalences \(Q_{\Sigma S} \{s : S\} \{p : \PAinf(fs)\}: Q_{\Sigma A}(p) \simeq Q_{\Sigma B}(p \concatinf s)\),
and a point \(q_0 : Q_{\Sigma A}(\reflinf)\). The goal is to conjure a section, \ie define a pair of dependent functions \(t_A^{\infty}\{a : A\} : (p : \PAinf(a)) \to Q_{\Sigma A}(p)\), \(t_B^{\infty}\{b : B\} : (p : \PBinf(b)) \to Q_{\Sigma B}(p)\),
and a family of identifications \(t_S^{\infty}\{s : S\} : (p : \PAinf(f s)) \to Q_{\Sigma S}(t_A^{\infty}(p)) = t_B^{\infty}(p \concatinf s)\)

Recall that \autoref{lem:functoriality} gives us the family of coherent homotopies \(C_n\{s : S\}(p : \PA{n}(f s)) : \inA{n}(p) \concatinf s = \inB{n+1}(p \concats{n} s)\), and the type families \(Q_{\Sigma A}\{a\}\) and \(Q_{\Sigma B}\{b\}\) induce dependent sequential diagrams \(Q_{\Sigma A}^{\bullet}\{a\}\) over \(\PAdiag{\bullet}(a)\) and \(Q_{\Sigma B}^{\bullet}\{b\}\) over \(\PBdiag{\bullet}(b)\), respectively.

In order to define the dependent maps \(t_A^{\infty}\) and \(t_B^{\infty}\), we will proceed by induction on their respective arguments \(p\), which are elements of sequential colimits. Using the dependent universal property, this amounts to providing dependent maps \(t_A^n \{a : A\} (p : \PA{n}(a)) : Q_{\Sigma A}^n(p)\) and coherences \(K_A^n \{a : A\} (p : \PA{n}(a)) : \kapA{n}(p) \tr t_A^n(p) = t_A^{n + 1}(\inclA{n}(p))\), and analogously for \(t_B^n\) and \(K_B^n\). The coherence \(t_S^{\infty}\) will then be constructed by building coherence cubes relating those sections, and applying \autoref{lem:preserves-cubes}.

Let us begin by defining the maps. We will once again start from a semi-formal naïve construction, point out the computational difficulties, and then give the formal version.

The maps will be defined together by induction on \(n\). In the zero case, \(t_A^0\{a\}\) eliminates \(p : (a_0 = a)\) by path induction and returns the provided basepoint \(q_0 : Q_{\Sigma A}^0(\refl)\), and \(t_B^0(b)\) eliminates \(p : \0\) by \(\exf\). In the successor case, we are eliminating out of the pushouts \(\PA{n+1}(a)\) and \(\PB{n+1}(b)\) using the dependent universal property. Recall that this means providing dependent cocones under the spans defined in \autoref{constr:zigzag-spans}. Since functions out of pushouts do not compute even on point constructors in our setting, we differentiate the induced map, \eg \(t_B^{n+1}\), from the two maps in its defining dependent cocone, \eg \(\overline{t_B^{n+1}}(\inl(\blank))\) and \(\overline{t_B^{n+1}}(\inr(\blank))\). We may visualize the problem in three dimensions using dependent diagrams, as indicated in \autoref{subfig:map-prisms} --- the actions on point constructors are defined using maps from previous stages, the fiberwise maps \(Q_{\Sigma S}^n\{s, p\} : Q_{\Sigma A}^n(p) \to Q_{\Sigma B}^{n+1}(p \concats{n} s)\) induced by \(Q_{\Sigma S}\) as in \autoref{defn:squares-over}, and the fiberwise equivalences \(\PhiDiag{n}\{s, p\} : Q_{\Sigma B}^{n+1}(p) \to Q_{\Sigma A}^{n+1}(p \concatinvs{n+1} \overline{s})\)\mlabel{PhiDiag} defined as
\begin{tikzcd}
  Q_{\Sigma B}^{n+1}(p)
  \arrow[r, "\kapB{n+1}(p) \tr" outer sep=2pt] &
  Q_{\Sigma B}^{n+2}(\inclB{n+1}(p))
  \arrow[r, "\glueB{n+1}(p) \tr" outer sep=3pt] &[10pt]
  Q_{\Sigma B}^{n+2}((p \concatinvs{n+1} \overline{s}) \concats{n+1} s)
  \arrow[r, "(Q_{\Sigma S}^{n+1})^{-1}"] &[5pt]
  Q_{\Sigma A}^{n+1}(p \concatinvs{n+1} \overline{s}).
\end{tikzcd}

\begin{figure}
\begin{tikzcd}[column sep=0, row sep=small]
  & Q_{\Sigma A}^n \arrow[dr, "Q_{\Sigma S}^n"] \arrow[from=dd, "t_A^n", very near start] \\
  Q_{\Sigma B}^n \arrow[rr, "\kapB{n} \tr" pos=0.2, crossing over]
  & & Q_{\Sigma B}^{n+1} \\
  & \PA{n}(f s) \arrow[dr, near start, "\blank \concats{n} s"] \\
  \PB{n}(b) \arrow[rr, "\inclB{n}"'] \arrow[uu, "t_B^n"]
  & & \PB{n+1} \arrow[uu, dotted, "t_B^{n+1}"']
\end{tikzcd}\hspace{5em}
\begin{tikzcd}[column sep=0, row sep=small]
  Q_{\Sigma A}^n \arrow[rr, "\kapA{n} \tr"]
  & & Q_{\Sigma A}^{n+1} \\
  & Q_{\Sigma B}^{n+1} \arrow[ur, "\PhiDiag{n}"'] \\
  \PA{n}(a) \arrow[rr, "\inclA{n}" pos=0.2] \arrow[uu, "t_A^n"]
  & & \PA{n+1} \arrow[uu, dotted, "t_A^{n+1}"'] \\
  & \PB{n+1}(g s) \arrow[ur, "\blank \concatinvs{n+1} \overline{s}"'] \arrow[uu, crossing over, "t_B^{n+1}" pos=0.1]
\end{tikzcd}
\centering
\caption{\label{subfig:map-prisms}Construction of the dependent maps \(t_B^{n+1}\) and \(t_A^{n+1}\). The type families at bottom right do not have an index, since each vertical square requires a different one.}
\end{figure}

To define \(t_B^{n + 1}\), we need to provide the dependent maps \(\overline{t_B^{n + 1}}\{b\}(\inl(\blank)) : (p : \PB{n}(b)) \to Q_{\Sigma B}^{n+1}(\inclB{n}(p))\) and \(\overline{t_B^{n + 1}}(\inr(s, \refl, \blank)) : (p : \PA{n}(f s)) \to Q_{\Sigma B}^{n+1}(p \concats{n} s)\).
Following \autoref{subfig:map-prisms}, the maps are defined by \(\overline{t_B^{n + 1}}(\inl(p)) \defeq \kapB{n}(p) \tr t_B^n(p)\) and \(\overline{t_B^{n + 1}}(\inr(s, \refl, p)) \defeq Q_{\Sigma S}^n(t_A^n(p))\). Similarly for \(t_A^{n + 1}\), we set \(\overline{t_A^{n + 1}}(\inl(p)) \defeq \kapA{n}(p) \tr t_A^n(p)\) and \(\overline{t_A^{n + 1}}(\inr(s, \refl, p)) \defeq \PhiDiag{n}(t_B^{n + 1}(p))\).

Next, we need to prove that those maps are coherent, \ie define families of identifications
\begin{align*}
  T_B^{n + 1}(s, \refl, \blank) : (p : \PB{n}(g s)) \to{}
  &\glueB{n}(p) \tr \overline{t_B^{n+1}}(\inclB{n}(p)) = \overline{t_B^{n+1}}((p \concatinvs{n} \overline{s}) \concats{n} s) \\
  T_A^{n+1}(s, \refl, \blank) : (p : \PA{n}(f s)) \to{}
  &\glueA{n}(p) \tr \overline{t_A^{n+1}}(\inclA{n}(p)) = \overline{t_A^{n+1}}((p \concats{n} s) \concatinvs{n+1} \overline{s}),
\end{align*}
which we write as \(T_B^{n+1}(p)\) and \(T_A^{n+1}(p)\), skipping the \(s\) and \(\refl\) arguments.

\begin{figure}
\begin{tikzcd}[column sep=-5pt, row sep=small]
  & Q_{\Sigma A}^{n+1} \arrow[dr, color=acc-blue, "Q_{\Sigma S}^{n+1}"] \arrow[from=dd, color=acc-blue, "t_A^{n+1}", very near start] \\
  Q_{\Sigma B}^{n+1} \arrow[rr, crossing over, color=acc-red, "/"{marking, near end}, "\kapB{n+1}\tr" near start]
  \arrow[ur, "\PhiDiag{n}"]
  & & Q_{\Sigma B}^{n+2} \\
  & \PA{n+1}(f s) \arrow[dr, near start, "\blank \concats{n+1} s"] \\
  \PB{n+1}(g s) \arrow[rr, "\inclB{n+1}"'] \arrow[uu, color=acc-red, "t_B^{n+1}"]
  \arrow[ur, color=acc-blue, near end, "\blank \concatinvs{n+1} \overline{s}"']
  & & \PB{n+2}(g s)
\end{tikzcd}\hspace{3em}
\begin{tikzcd}[column sep=-5pt, row sep=small]
  Q_{\Sigma A}^n \arrow[rr, color=acc-red, "/"{marking, near end}, "\kapA{n}\tr"]
  \arrow[dr, "Q_{\Sigma S}^n"']
  & & Q_{\Sigma A}^{n+1} \\
  & Q_{\Sigma B}^{n+1} \arrow[ur, color=acc-blue, "\PhiDiag{n}"'] \\
  \PA{n}(f s) \arrow[rr, "\inclA{n}" very near start] \arrow[uu, color=acc-red, "t_A^n"]
  \arrow[dr, color=acc-blue, "\blank \concats{n} s"']
  & & \PA{n+1}(f s) \\
  & \PB{n+1}(g s) \arrow[ur, "\blank \concatinvs{n+1} \overline{s}"'] \arrow[uu, crossing over, color=acc-blue, "t_B^{n+1}"', near end]
\end{tikzcd}
\caption{\label{fig:coh-prisms}Construction of the coherences \(T_B^{n+2}\) and \(T_A^{n+1}\). Slashed arrows indicate where an additional implicit transport is applied.}
\end{figure}

\begin{figure}
\begin{tikzcd}
  Q_{\Sigma A}^n(p)
  \arrow[rr, "\kapA{n}\tr"]
  \arrow[d, "Q_{\Sigma S}^n"']
  & & Q_{\Sigma A}^{n+1}(\inclA{n}(p))
  \arrow[d, "Q_{\Sigma S}^{n+1}"]
  \arrow[rr, "\glueA{n+1} \tr"]
  &[-17pt] & Q_{\Sigma A}^{n+1}(*)
  \\
  Q_{\Sigma B}^{n+1}(p \concats{n} s)
  \arrow[r, color=acc-blue, "\kapB{n+1} \tr"' outer sep=3pt]
  & Q_{\Sigma B}^{n+2}(\inclB{n+1}(p \concats{n} s))
  \arrow[r, "H_n \tr" outer sep=3pt]
  \arrow[dr, color=acc-blue, "\id"']
  & Q_{\Sigma B}^{n+2}(\inclA{n}(p) \concats{n+1} s)
  \arrow[d, "H_n^{-1} \tr"] \\
  & & Q_{\Sigma B}^{n+2}(\inclB{n+1}(p \concats{n} s))
 \arrow[rr, color=acc-blue, "\glueB{n+1} \tr"' outer sep=2pt]
  & & Q_{\Sigma B}^{n+2}(\dagger)
 \arrow[uu, color=acc-blue, "(Q_{\Sigma S}^{n+1})^{-1}"]
\end{tikzcd}
\caption{\label{fig:top-factor}Factorization of the top triangle of \(T_A^{n+1}\). The indices on the right are \(* \defeq (\inclA{n}(p) \concats{n} s) \concatinvs{n+1} \overline{s}\), and \(\dagger \defeq (p \concats{n} s) \concatinvs{n+1} \overline{s} \concats{n+1} s\).}
\end{figure}

The coherence \(T_B^{n+1}\) does further case analysis on \(n\). In the zero case, we are eliminating from the empty type \(\PB{0}(gs)\), so we pose \(T_B^1 \defeq \exf\). For the successor case, follow the left diagram in \autoref{fig:coh-prisms}. The type of \(T_B^{n+2}\) is the type of homotopies from the red composition to the blue composition \emph{over} the bottom triangle \(\glueB{n}\) --- the arrow is slashed to indicate that it is followed by an implicit transport along the homotopy drawn at the bottom of the diagram. We first construct the top triangle by observing that by the definition of \(\PhiDiag{n}\), it suffices to invoke the fact that \((Q_{\Sigma S}^{n+1})^{-1}\) is a section of \(Q_{\Sigma S}^{n+1}\), \ie there is an identification \(q = Q_{\Sigma S}^{n+1}((Q_{\Sigma S}^{n+1})^{-1}(q))\) for all \(q : Q_{\Sigma B}^{n+1}(p)\), which we instantiate with \(\glueB{n+1}(p) \tr (\kapB{n+1}(p) \tr t_B^{n+1}(p))\). The coherence is completed by the inverse of the right computation rule of \(t_A^{n+1}\), which fills the vertical square.

To construct \(T_A^{n+1}\), recall that we intend to finish the overall proof by \autoref{lem:preserves-cubes}. The cubes of sections will be obtained by pasting prisms whose outlines use \(T_B^{n+1}\) and \(T_A^{n+1}\). In order to have the prism fillers be related to the cubes, we expect the coherence of \autoref{defn:squares-over} to appear in the top triangle of \(T_A^{n+1}\). With that in mind, we factor the top triangle as shown in \autoref{fig:top-factor}, into the coherence of \autoref{defn:squares-over} and a technical adjustment described in \autoref{defn:top-triangle-inner}. The unfolded definition of \(\PhiDiag{n}\) is highlighted in blue. To complete \(T_A^{n+1}\), we fill the vertical square by applying the inverse of the right computation rule of \(t_B^{n+1}\).

\begin{constr}
\href{https://archive.vojtechstep.eu/zigzag-construction/synthetic-homotopy-theory.dependent-functoriality-sequential-colimits.html\#general-definition-in-dependent-squares}{\ExternalLink}
Given a function \(f : A \to B\), a family of equivalences \(e\{a : A\}: P(a) \to Q(f(a))\), a path \(r : x = y\) in \(A\), and a path \(t : z = f(y)\) in \(B\), there is a homotopy
\begin{center}
\begin{tikzcd}
  P(x) \arrow[r, "r \tr"] \arrow[d, "e"'] &
  P(y) \\
  Q(f x) \arrow[d, "(t \concat \ap_f(r^{-1}))^{-1} \tr"'] \\
  Q(z) \arrow[r, "t \tr"'] &
  Q(f y) \arrow[uu, "e^{-1}"']
\end{tikzcd}
\end{center}
defined by path induction on \(r\) and \(t\). After induction, all the transports compute away, and the homotopy \(e^{-1} \comp e \htpy \id\) is filled by the proof that \(e^{-1}\) is a retraction of \(e\).
\label{defn:top-triangle-inner}
\end{constr}

In the formalization, we must again consider computational behavior. Since \(t_B^n\) and \(t_A^n\) are defined together by induction on \(n\), and the definition of \(t_B^{n + 1}\) does one more case split on \(n\), we end up with three cases for the induction, namely \(0\), \(1\) and \(n + 2\). As a consequence, \(\overline{t_A^{n + 1}}(\inr(s, \refl, p))\) does not have a uniform definition, because it is also defined by cases \(0\), \(1\) and \(n + 2\). But we rely on its definition when computing the coherence of \(t_B^{n + 1}\). If we try to case split on \(n\) again to handle the cases \(t_A^1\) and \(t_A^{n + 2}\) separately, we would end up defining everything in terms of the cases \(0\), \(1\), \(2\) and \(n + 3\), and encounter the same problem. Instead, during the definition of \(t_B^{n + 1}\) and \(t_A^{n + 1}\) we carry a proof that \(\overline{t_A^{n + 1}}(\inr(s, \refl, p))\) is identical to the expected composition of \(\PhiDiag{n}\) and \(t_B^{n+1}\).

Furthermore, during induction we need to compute with \(t_B^{n + 1}\) and \(t_A^{n + 1}\) as maps defined by the dependent universal property, meaning that we need to carry around their defining dependent cocones. Rather than defining together the maps, the dependent cocones, and proofs that the maps are defined by the respective dependent cocones, we prefer to construct only the dependent cocones during induction, materializing their induced maps \(t_B^{n + 1}\) and \(t_A^{n + 1}\) only when necessary.

\begin{defi}
Given a natural number \(n\), the type of \textbf{section cocones}~\href{https://archive.vojtechstep.eu/zigzag-construction/synthetic-homotopy-theory.zigzag-construction-identity-type-pushouts.html\#the-type-of-section-cocones}{\ExternalLink} at stage \(n\) is the type of triples \((d_B^n, d_A^n, R^n)\), where \(d_B^n (b : B) : \depCoconePO(\PB{n+1}(b), Q_{\Sigma B}^{n+1})\) and \(d_A^n (a : A) : \depCoconePO(\PA{n+1}(a), Q_{\Sigma A}^{n+1})\) are families of dependent cocones over the cocones \(\PB{n + 1}(b)\) and \(\PA{n + 1}(a)\), respectively, and \(R^n(s)\) is an identification between the element \(\PhiDiag{n}(\depCogapPO(d_B^n(g s), p))\) and the right map of \(d_A^n(f s)\) applied to \((s, \refl, p)\), for all \(p : \PB{n + 1}(g s)\).
\end{defi}

\begin{constr}
\href{https://archive.vojtechstep.eu/zigzag-construction/synthetic-homotopy-theory.zigzag-construction-identity-type-pushouts.html\#section-cocones}{\ExternalLink}
For any natural number \(n\), construct a section cocone at stage \(n\)\label{dA}\label{dB}.

Begin by case splitting on \(n\). Define the left map \(d_B^0(b, \inl(p)) : Q_B^1(\inclB{0}(p))\) by \(\exf(p)\), the right map \(d_B^0(g s, \inr(s, \refl, p : \PA{0}(a_0))) : Q_B^1(p \concats{0} s)\) by induction on \(p : a_0 = f s\), returning \(Q_{\Sigma S}^0(q_0)\). Define the coherence \(d_B^0(g s, \glue(s, \refl, p))\) by \(\exf(p)\) as well. For \(d_A^0\), define the left map \(d_A^0(a, \inl(p)) : Q_A^1(\inl(p))\) by induction on \(p : a_0 = a\), mapping \(\refl\) to \(\kapA{0}(\refl) \tr q_0\). To define the right map, replace the \(t_B^1\{g s\}\) from the informal description by \(\depCogapPO(d_B^0(g s))\), making it out to be \(d_A^0(f s, \inr(s, \refl, p)) \defeq \PhiDiag{0}(\depCogapPO(d_B^0(f s), p))\). Then the coherence
\begin{equation*}
d_A^0(f s, \glue(s, \refl, \refl)) : \glueA{0}(\refl) \tr d_A^0(f s, \inclA{0}(\refl)) = d_A^0(f s, (\refl \concats{0} s) \concatinvs{1} \overline{s})
\end{equation*}
is defined exactly as in the informal description, by pasting a square and a dependent triangle. The identification \(R_0\) is inhabited by \(\refl\).

In the successor case we proceed completely analogously, following the informal description, replacing \(t_B^{n + 1}\{b\}\) by \(\depCogapPO(d_B^n(b))\) and \(t_A^{n+1}\{a\}\) by \(\depCogapPO(d_A^n(a))\). The only other difference is when defining the coherence \(d_B^{n+1}(g s, \glue(s, \refl, p))\) --- when filling the vertical square, we first need to apply \(R_n(p)\), and only then use the right computation rule of \(\depCogapPO(d_A^n(f s))\).
\end{constr}

We now have enough data to define the dependent map components of the section of \((Q_{\Sigma A}, Q_{\Sigma B}, Q_{\Sigma S})\).

\begin{constr}
\href{https://archive.vojtechstep.eu/zigzag-construction/synthetic-homotopy-theory.zigzag-construction-identity-type-pushouts.html\#induced-section-of-the-right-dependent-family}{\ExternalLink} Construct the map \(\tBinf \{b : B\} : (p : \PBinf(b)) \to Q_{\Sigma B}(p)\)\mlabel{tBinf} using the universal property of \(\PBinf(b)\) as the colimit of \(\PBdiag{\bullet + 1}(b)\), meaning we are allowed to provide the successor cases only. For the maps \(\tBn{n+1}(p : \PB{n+1}(b)): Q_{\Sigma B}^{n+1}(p)\)\mlabel{tBn} use \(\depCogapPO(\dB{n}(b))\). For the coherences \(K_B^{n+1}(p : \PB{n+1}(b)) : \kapB{n+1}(p) \tr t_B^{n+1}(p) = t_B^{n+2}(\inclB{n+1}(p))\), observe that the left computation rule of \(\tBn{n+2}\) gives us an identification \(\tBn{n+2}(\inclB{n+1}(p)) = \kapB{n+1}(p) \tr \tBn{n+1}(p)\), which may be inverted to define \(K_B^{n+1}\).
\end{constr}

\begin{constr}
\href{https://archive.vojtechstep.eu/zigzag-construction/synthetic-homotopy-theory.zigzag-construction-identity-type-pushouts.html\#induced-section-of-the-left-dependent-family}{\ExternalLink} Construct the map \(\tAinf \{a : A\} : (p : \PAinf(a)) \to Q_{\Sigma A}(p)\)\mlabel{tAinf} by induction on \(p\). Define the maps \(\tAn{n}(p : \PA{n}(a)) : Q_{\Sigma A}^n(p)\)\mlabel{tAn} by case analysis, taking \(\tAn{0}(\refl) \defeq q_0\) and \(\tAn{n+1} \defeq \depCogapPO(\dA{n}(a))\). For the coherences \(K_A^n(p : \PA{n}(a)) : \kapA{n}(p) \tr \tAn{n}(p) = \tAn{n+1}(\inclA{n}(p))\), again proceed by case analysis, and use the inverses of the left computation rules of \(\tAn{1}\) and \(\tAn{n+2}\) for the zero and successor cases, respectively.
\end{constr}

The final datum we need is the coherence square \(t_S^{\infty}\{s\}\) between \(\tAinf\{f s\}\) and \(\tBinf\{g s\}\), filling the right square of \autoref{fig:app-cube}. As suggested by the diagram, the square is constructed by applying \autoref{lem:preserves-cubes}. The first step is to construct the cubes as indicated, by pasting pairs of prisms at every index; the second step is to construct a homotopy between the top face obtained by pasting and the top face expected by the lemma.

\begin{figure}
\begin{center}
\begin{tikzcd}[column sep=small, row sep=small]
  Q_{\Sigma A}^0 \arrow[dr, "Q_{\Sigma S}^0" pos=0.7] \arrow[rr, "\kapA{0} \tr"]
  & & Q_{\Sigma A}^1 \arrow[dr, "Q_{\Sigma S}^1" pos=0.7] \arrow[rr] \arrow[from=dd, "\tAn{1}", very near start]
  &[-5pt] &[-5pt] \cdots
  &[-5pt] &[-5pt]
  Q_{\Sigma A}
  \arrow[dr, "Q_{\Sigma S}", "\simeq"']
  \\
  & Q_{\Sigma B}^1 \arrow[rr, crossing over, "\kapB{1} \tr"{pos=0.2}, "/"{marking, near end}] \arrow[ur, "\PhiDiag{0}"]
  & & Q_{\Sigma B}^2 \arrow[rr]
  & & \cdots
  & &
  Q_{\Sigma B} \\
  \PA{0}(f s) \arrow[uu, "\tAn{0}"] \arrow[dr, "\blank \concats{0} s"'] \arrow[rr, "\inclA{0}" very near start]
  & & \PA{1}(f s) \arrow[rr] \arrow[dr, "\blank \concats{1} s"', near start]
  & & \cdots
  & &
   \PAinf(f s) \arrow[dr, "\blank \concatinf s"'] \arrow[uu, "\tAinf"]
  \\
  & \PB{1}(g s) \arrow[uu, "\tBn{1}"', near start, crossing over] \arrow[rr, "\inclB{1}"'] \arrow[ur, "\blank \concatinvs{1} \overline{s}"' near start]
  & & \PB{2}(g s) \arrow[uu, "\tBn{2}"', near start, crossing over] \arrow[rr]
  & & \cdots
  & &
  \PBinf(g s) \arrow[uu, "\tBinf"']
\end{tikzcd}
\end{center}
\caption{\label{fig:app-cube}Strategy for defining the coherence \(\tSinf\).}
\end{figure}

\begin{figure}
\begin{tikzcd}[column sep=small, row sep=small]
  Q_{\Sigma A}^n \arrow[rr, "/"{marking, near end}, "\kapA{n}\tr"]
  \arrow[dr, "Q_{\Sigma S}^n" pos=0.7]
  & & Q_{\Sigma A}^{n+1} \\
  & Q_{\Sigma B}^{n+1} \arrow[ur, "\PhiDiag{n}"'] \\
  \PA{n}(f s) \arrow[rr, "\inclA{n}" very near start] \arrow[uu, "\tAn{n}"]
  \arrow[dr, "\blank \concats{n} s"']
  & & \PA{n+1}(f s) \arrow[uu, "\tAn{n+1}"']  \\
  & \PB{n+1}(g s) \arrow[ur, "\blank \concatinvs{n+1} \overline{s}"'] \arrow[uu, crossing over, "\tBn{n+1}"', near end]
\end{tikzcd}
\hspace{2em}
\begin{tikzcd}[column sep=small, row sep=small]
  & Q_{\Sigma A}^{n+1} \arrow[dr, "Q_{\Sigma S}^{n+1}"] \arrow[from=dd, "\tAn{n+1}"', very near start] \\
  Q_{\Sigma B}^{n+1} \arrow[rr, crossing over, "/"{marking, near end}, "\kapB{n+1}\tr" near start]
  \arrow[ur, "\PhiDiag{n}"]
  & & Q_{\Sigma B}^{n+2} \\
  & \PA{n+1}(f s) \arrow[dr, near start, "\blank \concats{n+1} s"'] \\
  \PB{n+1}(g s) \arrow[rr, "\inclB{n+1}"'] \arrow[uu, "\tBn{n+1}"]
  \arrow[ur, near end, "\blank \concatinvs{n+1} \overline{s}"]
  & & \PB{n+2}(g s) \arrow[uu, "\tBn{n+2}"']
\end{tikzcd}
\caption{\label{eqn:coherence-prisms}Dependent prisms to be pasted into a coherence cube.}
\end{figure}

The two prisms we want to paste are depicted in \autoref{eqn:coherence-prisms}. In the left prism, the bottom triangle is the \(\glueA{n}\) path constructor, the back square is the inverse of the left computation rule of \(\tAn{n+1}\), the right square is the inverse of the right computation rule of \(\tAn{n+1}\), and the composition of the top triangle and left square is the coherence datum of \(\tAn{n+1}\). After transposing the inverted faces to the other side of the equality in the type of the prism, we can fill the coherence by the computation rule of \(\tAn{n+1}\) on the path constructor.

A similar argument fills the right prism --- the bottom triangle is \(\glueB{n+1}\), the front and right squares are computation rules of \(\tBn{n+2}\), and the left square and top triangle are the coherence datum of \(\tBn{n+2}\). Filling this prism also requires mechanically adjusting the outline.

Once the two prisms are constructed, we use the fact that they share the diagonal square, so we may glue them together. This is achieved by algebraic manipulation of dependent paths and cylinders of sections, and mirrors the way the triangles are glued at the bottom to form the base square in \autoref{constr:zigzag-hom}.

The resulting cube has the correct vertices, edges, vertical squares, and bottom square, but the top square we get by composition is not the one required by \autoref{lem:preserves-cubes}. Fortunately we defined the top triangles in a way that makes it possible to show that the two squares are homotopic. The proof is constructed by appropriate abstraction followed by path induction, and is not very insightful, so it is omitted. Details are available in the formalization.

\begin{lem}
\href{https://archive.vojtechstep.eu/zigzag-construction/synthetic-homotopy-theory.zigzag-construction-identity-type-pushouts.html\#realigning-the-top-face-of-the-cubes}{\ExternalLink}
For every element \(p : \PA{n}(f s)\) there is a homotopy between the square
\begin{center}
\begin{tikzcd}
  Q_{\Sigma A}^n(p) \arrow[r, "\kapA{n}(p) \tr"]
  \arrow[d, "Q_{\Sigma S}^n"']
  &[7em] Q_{\Sigma A}^{n+1}(\inclA{n}(p))
  \arrow[d, "Q_{\Sigma S}^{n+1}"] \\
  Q_{\Sigma B}^{n+1}(p \concats{n} s) \arrow[r, "(H_n(p) \tr) \comp (\kapB{n+1}(p \concats{n} s) \tr)"']
  & Q_{\Sigma B}^{n+2}(\inclA{n}(p) \concatinvs{n+1} \overline{s})
\end{tikzcd}
\end{center}
from \autoref{defn:squares-over}, and the pasting of the two top dependent triangles in \autoref{eqn:coherence-prisms}.
\label{lemma:coherent-top}
\end{lem}

\begin{constr}
\href{https://archive.vojtechstep.eu/zigzag-construction/synthetic-homotopy-theory.zigzag-construction-identity-type-pushouts.html\#induced-square-of-sections}{\ExternalLink}
Construct the family of homotopies \(\tSinf\{s : S\}(p : \PAinf(f s)) : Q_{\Sigma S}(\tAinf(p)) = \tBinf(p \concatinf s)\)\label{tSinf} by application of \autoref{lem:preserves-cubes}. Use the right computation rules of \(\tBn{n+1}\) for the faces \(F_n\). Take the cubes to be the pasting of the prisms from \autoref{eqn:coherence-prisms}, with their top faces adjusted by \autoref{lemma:coherent-top}.
\label{defn:tS}
\end{constr}

\begin{thm}
\href{https://archive.vojtechstep.eu/zigzag-construction/synthetic-homotopy-theory.zigzag-construction-identity-type-pushouts.html\#the-zigzag-construction-is-an-identity-system}{\ExternalLink}
The descent data \((\PAinf, \PBinf, \blank \concatinf s)\) pointed at \(\reflinf\) is an identity~system.
\end{thm}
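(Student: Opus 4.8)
The plan is to unfold the definition of an identity system on descent data and observe that essentially all of the required data has already been assembled. Fixing arbitrary descent data $(Q_{\Sigma A}, Q_{\Sigma B}, Q_{\Sigma S})$ over the total span --- precisely the data assumed throughout this subsection --- I must exhibit a section of the evaluation map $\evReflIdSystem$: a map $\indidsystemDD : Q_{\Sigma A}(a_0, \reflinf) \to \sectDD(Q_{\Sigma A}, Q_{\Sigma B}, Q_{\Sigma S})$ together with a homotopy $\evReflIdSystem \comp \indidsystemDD \htpy \id$.

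First I would let the argument $q_0 : Q_{\Sigma A}(\reflinf)$ of $\indidsystemDD$ be the point $q_0$ fixed at the start of the subsection. Then the maps $t_A$, $t_B$ and the coherence $t_S$ produced above --- the latter by \autoref{defn:tS}, which rests on \autoref{lem:preserves-cubes} and \autoref{lemma:coherent-top} --- together form an element of $\sectDD(Q_{\Sigma A}, Q_{\Sigma B}, Q_{\Sigma S})$. Since every step of this construction is carried out uniformly in $q_0$, the prescription $q_0 \mapsto (t_A, t_B, t_S)$ is a well-defined function, which we take to be $\indidsystemDD$.

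It remains to supply the homotopy $\evReflIdSystem \comp \indidsystemDD \htpy \id$, which unfolds to a family of paths $t_A(\reflinf) = q_0$ indexed by $q_0$. Since $t_A$ is the dependent cogap of the dependent cocone $(t_A^{\bullet}, K_A^{\bullet})$, it carries a computation rule identifying $t_A(\iota_A^0(p))$ with $t_A^0(p)$. Instantiating at $p \judeq \refl_{a_0}$ and using $\reflinf \judeq \iota_A^0(\refl_{a_0})$ together with $t_A^0(\refl_{a_0}) \judeq q_0$ --- which holds because $t_A^0$ is defined by path induction returning $q_0$ in \autoref{constr:section-maps} --- this computation rule already has the desired type. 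Collecting these paths over all $q_0$ yields the homotopy, and with it the claim that the zigzag descent data is an identity system at $\reflinf$.

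I do not expect a genuine obstacle at this point: constructing the section is the content of the entire preceding development, so what remains is bookkeeping. The main care is to keep the currying conventions aligned --- the evaluation map sends $(t_A, t_B, t_S)$ to $t_A\{a_0\}(\reflinf)$ --- so that the target of the stage-zero computation rule of $t_A$ matches $Q_{\Sigma A}(a_0, \reflinf)$ on the nose, and to confirm that the universally quantified $Q_\Sigma$ in the definition of identity system is exactly the descent data over the total span assumed here.
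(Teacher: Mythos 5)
Your proposal is correct and follows the same route as the paper: take the section $(t_A, t_B, t_S)$ already constructed, and verify $t_A(\reflinf) = q_0$ by unfolding $\reflinf$ to $\iota_A^0(\refl)$, applying the computation rule of $t_A$ on the point constructor, and using that $t_A^0(\refl)$ is $q_0$ by definition. Your extra care in treating the sequential-colimit computation rule as a path rather than a judgmental equality is exactly the right bookkeeping in this axiomatic setting.
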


\begin{proof}
For arbitrary descent data \((Q_{\Sigma A}, Q_{\Sigma B}, Q_{\Sigma S})\) over the total span of the zigzag descent data pointed at \(q_0\), a section is given by \((\tAinf, \tBinf, \tSinf)\) constructed above. The equality \(\tAinf(\reflinf) = q_0\) holds by unfolding the left side to \(\tAinf(\inA{0}(\refl))\), and using the left computation rule to get \(\tAn{0}(\refl)\), which is defined to be \(q_0\).
\end{proof}

\begin{cor}
\href{https://archive.vojtechstep.eu/zigzag-construction/synthetic-homotopy-theory.zigzag-construction-identity-type-pushouts.html\#the-equivalences-with-path-spaces-of-pushouts}{\ExternalLink}
There are equivalences \(e_A\{a : A\} : (\inl(a_0) = \inl(a)) \simeq \PAinf(a)\) and \(e_B\{b : B\} : (\inl(a_0) = \inr(b)) \simeq \PBinf(b)\) such that for all \(p : (\inl(a_0) = \inl(f s))\) there is an equality \(e_B(p \concat H s) = e_A(p) \concatinf s\).
\end{cor}
\section{Conclusion and Related work}
\label{sec:org73f5cb7}

We have presented an encoding of the zigzag construction in a proof assistant, and gave a formal proof that it characterizes the path spaces of pushouts. The exposition illustrated subtleties of the inductive constructions, as a naïve transcription from paper would not have the necessary computational properties. The author was not able to leverage Agda's mutually recursive definitions, as seen in the formalization of the James construction. We pointed out some friction points with translating symbolic and diagrammatic reasoning to Agda. Those often necessitate verbose technical lemmas to manipulate paths in axiomatic HoTT. The author nonetheless found drawing diagrams to be invaluable for designing proofs in synthetic homotopy theory, such as \autoref{lem:preserves-cubes} and \autoref{defn:tS}. It is possible that a cubical type theory, such as the one implemented in Cubical Agda based on Cohen et al. \cite{CCHM}, could improve this specific aspect of the formalization, but we did not investigate this direction. Similarly, it could be worthwhile to attempt a formalization in a framework for simplicial type theory \cite{RS17,GWB26}, which has the potential to express more natural proofs of functoriality principles via synthetic morphisms. The author expects that a simple port of the current development would not bring much insight, so idiomatic proofs for the particular type theories should be developed first.

The author is aware of two previous attempts to formalize the zigzag construction. The first one is his previous work on the topic \cite{Step24}. It defines the square \autoref{defn:tS} using the universal property, but postulates the necessary coherences, since the unstructured goal one obtains by simply applying the universal property is hard to fill without the problem decomposition insight provided by this paper. The second, by Connors and Thorbjørnsen \cite{CT25}, based on the Coq-HoTT library \cite{Coq-HoTT}, is missing the same coherences.

There is a now an alternative description of the zigzag construction proposed by Wärn \cite{War24}, which presents it in more categorical than type theoretical terms, \eg by using types-over and pullback squares instead of type families and fiberwise equivalences. While the author did not attempt its formalization, he does not expect it to be necessarily easier than the one presented. The author attempted proving \autoref{lem:preserves-cubes} by taking the unstraightened perspective and defining sections of dependent diagrams to be morphisms with a retraction. Such a framework sidesteps talking about sections and fiberwise morphisms, but it raises the depth of path spaces we need to care about --- a cube of sections would be a pair of commuting cubes such that their composition is homotopic to a certain trivial cube. Type-checking the definition of cube composition had a noticeable performance impact, and defining a homotopy of cubes became practically intractable.
\section*{Acknowledgments}
\label{sec:org8a067e4}
I am grateful to Egbert Rijke for introducing me to synthetic homotopy theory, and for mentoring me through most of the time I spent on this formalization. I would like to thank Cyril Cohen and Assia Mahboubi, as well as anonymous reviewers, for providing feedback on earlier versions of this paper, which substantially improved the exposition.

\bibliographystyle{alphaurl}
\bibliography{./bibliography}

@Software{Agda,
  author =       {{Agda Developers}},
  year =         2025,
  title =        {{Agda}},
  url =          {https://agda.readthedocs.io/},
  version =      {2.7.0.1},
  accessed =     2025
}

@Software{AU25,
  author =       {Rijke, Egbert and Stenholm, Elisabeth and
                  Prieto-Cubides, Jonathan and Bakke, Fredrik and
                  Štěpančík, Vojtěch and {others}},
  year =         2025,
  license =      {MIT},
  title =        {{The agda-unimath library}},
  url =          {https://github.com/UniMath/agda-unimath/},
  note =         {Accessed on 2025-07-17}
}

@Article{AW09,
  title =        {Homotopy theoretic models of identity types},
  volume =       146,
  DOI =          {10.1017/S0305004108001783},
  number =       1,
  journal =      {Mathematical Proceedings of the Cambridge
                  Philosophical Society},
  author =       {Awodey, Steve and Warren, Michael A.},
  year =         2009,
  pages =        {45–55}
}

@Unpublished{GWB26,
  title = {Directed univalence in simplicial homotopy type theory},
  author = {Gratzer, Daniel and Weinberger, Jonathan and Buchholtz, Ulrik},
  eprint = {2407.09146},
  year = 2024
}

@InProceedings{Coq-HoTT,
  author =       {Bauer, Andrej and Gross, Jason and Lumsdaine, Peter
                  LeFanu and Shulman, Michael and Sozeau, Matthieu and
                  Spitters, Bas},
  title =        {The {HoTT} library: a formalization of homotopy type
                  theory in {Coq}},
  year =         2017,
  isbn =         9781450347051,
  doi =          {10.1145/3018610.3018615},
  booktitle =    {Proceedings of the 6th ACM SIGPLAN Conference on
                  Certified Programs and Proofs},
  numpages =     9,
  keywords =     {Coq, Higher inductive types, Homotopy type theory,
                  Univalent foundations, Universe polymorphism},
  location =     {Paris, France}
}

@InProceedings{CCHM,
  author =       {Cohen, Cyril and Coquand, Thierry and Huber, Simon
                  and M\"{o}rtberg, Anders},
  title =        {{Cubical Type Theory: A Constructive Interpretation
                  of the Univalence Axiom}},
  booktitle =    {21st International Conference on Types for Proofs
                  and Programs (TYPES 2015)},
  series =       {Leibniz International Proceedings in Informatics
                  (LIPIcs)},
  ISBN =         {978-3-95977-030-9},
  ISSN =         {1868-8969},
  year =         2018,
  URN =          {urn:nbn:de:0030-drops-84754},
  doi =          {10.4230/LIPIcs.TYPES.2015.5},
  annote =       {Keywords: univalence axiom, dependent type theory,
                  cubical sets}
}

@Software{CT25,
  author = {Connors, Ben and Thorbjørnsen, Thomas},
  year = 2024,
  title = {{Coq-HoTT --- ZigzagIdentity branch}},
  url = {https://github.com/ThomatoTomato/HoTT/blob/ZigzagIdentity/theories/PushoutPath/PushoutPath.v},
  note = {Accessed on 2026-07-02}
}

@InProceedings{KvR19,
  author =       {Kraus, Nicolai and von Raumer, Jakob},
  title =        {Path Spaces of Higher Inductive Types in Homotopy
                  Type Theory},
  year =         2019,
  booktitle =    {Proceedings of the 34th Annual ACM/IEEE Symposium on
                  Logic in Computer Science},
  doi =          {10.1109/LICS.2019.8785661}
}

@Book{HTT,
  author = {Lurie, Jacob},
  title = {Higher Topos Theory},
  year = 2009,
  publisher = {Princeton University Press},
  doi = {10.1515/9781400830558},
  eprint =       {math/0608040},
  archivePrefix ={arXiv},
  series = {Annals of Mathematical Studies},
  volume = 170
}

@Thesis{Rij19,
  title =        {Classifying Types},
  author =       {Egbert Rijke},
  type =         {phdthesis},
  institution =  {Carnegie Mellon University},
  year =         2019,
  eprint =       {1906.09435},
  archivePrefix ={arXiv}
}

@Book{Rij22,
  title =        {Introduction to Homotopy Type Theory},
  author =       {Egbert Rijke},
  publisher =    {Cambridge University Press},
  year =         2022,
  eprint =       {2212.11082},
  archivePrefix ={arXiv},
  primaryClass = {math.LO}
}

@Article{RS17,
  author = {Riehl, Emily and Shulman, Michael},
  title = {A type theory for synthetic $\infty$-categories},
  journal = {Higher Structures},
  volume = 1,
  year = 2017,
  pages = {147-224},
  eprint = {1705.07442}
}

@Masterthesis{Step24,
  author = {Štěpančík, Vojtěch},
  title = {{Formalization of Homotopy Pushouts in Homotopy Type Theory}},
  school = {Charles University},
  year = {2024},
  url = {https://vojtechstep.eu/notes/master-thesis.pdf}
}

@InProceedings{SvDR20,
  author =       {Sojakova, Kristina and van Doorn, Floris and Rijke,
                  Egbert},
  title =        {{Sequential Colimits in Homotopy Type Theory}},
  year =         2020,
  isbn =         9781450371049,
  url =          {https://doi.org/10.1145/3373718.3394801},
  doi =          {10.1145/3373718.3394801},
  booktitle =    {Proceedings of the 35th Annual ACM/IEEE Symposium on
                  Logic in Computer Science},
  numpages =     14,
  keywords =     {higher inductive types, homotopy type theory,
                  sequential colimits}
}

@Book{UF13,
  title =        {Homotopy Type Theory: Univalent Foundations of
                  Mathematics},
  author =       {{The Univalent Foundations Program}},
  publisher =    {\url{https://homotopytypetheory.org/book}},
  address =      {Institute for Advanced Study},
  year =         2013
}

@Unpublished{War23,
  title =        {Path Spaces of Pushouts},
  author =       {David W{\"{a}}rn},
  year =         2023,
  url =          {https://dwarn.se/po-paths.pdf},
  note =         {Accessed on 2026-07-02}
}

@Misc{War24,
  title =        {Path Spaces of Pushouts},
  author =       {David W{\"{a}}rn},
  year =         2024,
  eprint =       {2402.12339},
  archivePrefix ={arXiv},
  primaryClass = {math.AT}
}

@article{Bru19,
  author       = {Guillaume Brunerie},
  title        = {{The James Construction and {\(\pi\)}\({}_{\mbox{4}}\)(S\({}^{\mbox{3}}\))
                  in Homotopy Type Theory}},
  journal      = {J. Autom. Reason.},
  volume       = {63},
  number       = {2},
  pages        = {255--284},
  year         = {2019},
  doi          = {10.1007/S10817-018-9468-2}
}

@article{Bru16,
  author       = {Guillaume Brunerie},
  title        = {On the homotopy groups of spheres in homotopy type theory},
  year         = {2016},
  eprinttype   = {arXiv},
  eprint       = {1606.05916}
}
\end{document}